\newcounter{theorem}
\newtheorem{theorem}[theorem]{Theorem}
\newtheorem{lemma}[theorem]{Lemma}
\newtheorem{prop}[theorem]{Proposition}
\newtheorem{cor}[theorem]{Corollary}
\theoremstyle{definition}
\newtheorem{defn}[theorem]{Definition}
\theoremstyle{remark}
\newtheorem*{remark*}{Remark}
\newtheorem{rmk}[theorem]{Remark}
\numberwithin{equation}{section}
\newcommand{\R}{\mathbb{R}}
\newcommand{\C}{\mathrm{C}^*}
\newcommand{\Z}{\mathcal{Z}}
\newcommand{\id}{\mathrm{id}}
\newcommand{\Aut}{\mathrm{Aut}}
\newcommand{\Ad}{\mathrm{Ad}}
\newcommand{\cO}{\mathcal{O}}
\newcommand{\ontop}[2]{\stackrel{\mathclap{\normalfont\mbox{\footnotesize{#1}}}}{\ensuremath{#2}}} 
\title[Generic classification of quasi-free flows]{Generic classification of the quasi-free flows on the Cuntz algebra $\mathcal{O}_2$}
\author[R.\ Neagu]{Robert Neagu}
\address{\hskip-\parindent Robert Neagu, Department of mathematics, KU Leuven, Celestijnenlaan 200B, 3001, Leuven, Belgium.}
\email{robert.neagu@kuleuven.be}
\thanks{Funded by the European Union. Views and opinions expressed are however those of the author only and do not necessarily reflect those of the European Union or the European Research Council. Neither the EU nor the ERC can be held responsible for them.}
\begin{document}

\begin{abstract}
In this article we show that the quasi-free flows on the Cuntz algebra $\cO_2$ are generically classifiable by the inverse temperature of their unique KMS state. Along the way, we show that a large class of quasi-free flows on the Cuntz algebra $\cO_n$ are generically equivariantly $\Z$-stable and their corresponding dual action has the Rokhlin property generically.
\end{abstract}

\maketitle

\numberwithin{theorem}{section}	

\section*{Introduction}
\renewcommand*{\thetheorem}{\Alph{theorem}}

Noncommutative dynamics is a central and pervasive theme in operator algebras, largely due to the rich variety of ways in which groups can act on noncommutative structures.
While the study of actions of discrete groups proved to be a fundamental tool for understanding the structure and symmetries of operator algebras, many key applications in geometry and physics center around time evolutions; that is, continuous actions of $\mathbb{R}$, or \emph{flows}. For instance, powered by the groundbreaking theory of Tomita and Takesaki \cite{Tak70}, the study of flows proved to be a quintessential ingredient in the seminal Connes-Haagerup classification of injective factors \cite{ConnesInj, HaagerupFactors}. On the $\C$-algebraic side, the study of flows facilitated important developments such as in the theory of derivations (see for example \cite{Kad66,Ell74}) or in Bratteli–Robinson’s approach to quantum statistical mechanics \cite{BraRob87,BraRob81}.

A notoriously challenging problem is to determine when two flows on an operator algebra are the same up to some appropriate notion of equivalence (usually this is called \emph{cocycle conjugacy}).
One fundamental feature which makes flows particularly difficult to manoeuvre is the potential presence of traces on the induced crossed product. 
Consequently, it is natural to first consider flows with special properties which force the absence of traces on the crossed product. 
Indeed, the archetypal example is the so-called \emph{Rokhlin property} introduced by Kishimoto in \cite{KishRokhlinFlows}.
Among many noteworthy aspects, if $\alpha$ is a flow with the Rokhlin property on a Kirchberg algebra $A$, then the crossed product $A\rtimes_\alpha\mathbb{R}$ is again a Kirchberg algebra. In particular, the crossed product has no traces.

So far, if one looks at peak achievements in this direction such as the Masuda--Tomatsu classification \cite{MaTo16} or Szabó's classification of Rokhlin flows on Kirchberg algebras \cite{SzaRokhlinFlows}, it is evident that general classification results could only be achieved by restricting to the class of \emph{Rokhlin flows}.
Even the recent groundbreaking classification of group actions on Kirchberg algebras due to Gabe--Szabó \cite{GASZ22,DynamicalKP}, when restricted to the case of the group being $\mathbb{R}$, requires the flows to have the Rokhlin property.

This article aims to ignite the classification of flows without the Rokhlin property.
One prominent class of such flows is provided by the quasi-free flows on Cuntz algebras. 
In the case of the Cuntz algebra $\mathcal{O}_2$, assuming rational independence of the defining parameters, Kishimoto proved a dichotomy result for this class of flows: they either have the Rokhlin property or they have a unique KMS state \cite{Kish02}. In the same paper, Kishimoto showed that all Rokhlin quasi-free flows on $\mathcal{O}_2$ are cocycle conjugate to each other and posed the question of whether the cocycle conjugacy class of the ones without the Rokhlin property is determined by the inverse temperature of their unique KMS state (see the comments after \cite[Theorem 1.2]{Kish02}). Our main result provides a partial answer to Kishimoto's question and to \cite[Problem LV]{Questions} and constitutes the first - albeit generic - classification of faithful flows without the Rokhlin property.

\begin{theorem}\label{thm: IntroGenericClassif}[Theorem \ref{thm: GenericClassif}]
Let $L=(L_1,L_2)\in\mathbb{R}^2$ and $\alpha^L$ be the flow on $\cO_2$ defined by $\alpha_t^L(s_k)=e^{itL_k}s_k$ for $k=1,2$ and any $t\in\mathbb{R}$. If $L_1$ and $L_2$ are rationally independent and have the same sign, then $\alpha^L$ is generically classifiable, up to cocycle conjugacy, by the inverse temperature of the unique KMS state with respect to $\alpha^L$.
\end{theorem}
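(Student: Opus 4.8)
The plan is to sidestep the direct comparison of the flows on $\cO_2$ --- where the presence of a KMS state makes matters delicate --- by passing to the \emph{dual} flows on the crossed products, which should enjoy the Rokhlin property and hence fall within reach of existing classification results. I would begin with the easy half of the equivalence: the inverse temperature is a cocycle conjugacy invariant. Kishimoto's dichotomy (\cite{Kish02}) applies under our hypotheses (rational independence and equal signs), placing $\alpha^L$ in the branch with a unique KMS state, whose inverse temperature $\beta_0=\beta_0(L)$ is the unique root of $e^{-\beta_0 L_1}+e^{-\beta_0 L_2}=1$; existence and uniqueness follow from strict monotonicity of $\beta\mapsto e^{-\beta L_1}+e^{-\beta L_2}$ when $L_1,L_2$ share a sign. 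Since automorphisms preserve KMS states and a cocycle perturbation induces a temperature-preserving bijection between the KMS states of the two flows, $\beta_0$ is constant on cocycle conjugacy classes. Everything then reduces to the converse: for generic $L,L'$ with $\beta_0(L)=\beta_0(L')$, the flows $\alpha^L$ and $\alpha^{L'}$ are cocycle conjugate.

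For the converse I would assemble the following steps. First, identify the crossed product $B_L:=\cO_2\rtimes_{\alpha^L}\R$: the Connes--Thom isomorphism gives $K_\ast(B_L)\cong K_{\ast+1}(\cO_2)=0$, and once one verifies that $B_L$ is a stable Kirchberg algebra in the UCT class --- pure infiniteness and simplicity being consequences of the structure of a quasi-free flow with a unique KMS state, cf.\ Kishimoto --- Kirchberg--Phillips yields $B_L\cong\cO_2\otimes\mathcal K$ for every admissible $L$. Second, establish the two generic inputs highlighted in the abstract: that the dual flow $\hat\alpha^L$ has the Rokhlin property, and that $\alpha^L$ is equivariantly $\Z$-stable, for all $L$ outside a meagre (and null) set; I expect both to hinge on a Diophantine condition measuring how badly $L_1/L_2$ is approximated by rationals. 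Third, with $\hat\alpha^L$ and $\hat\alpha^{L'}$ now Rokhlin flows on the Kirchberg algebra $\cO_2\otimes\mathcal K$, invoke Szabó's classification of Rokhlin flows (\cite{SzaRokhlinFlows}): they are cocycle conjugate exactly when they are equivariantly $\KK$-equivalent. Fourth, transfer the conclusion back: Takai duality turns a cocycle conjugacy of the duals into one between $\alpha^L\otimes\id_{\mathcal K}$ and $\alpha^{L'}\otimes\id_{\mathcal K}$ (carrying the stabilising regular-representation flow), and equivariant $\Z$-stability, combined with the $\cO_\infty$-absorption automatic for Kirchberg algebras, lets me absorb that flow and compress to a fixed invariant full corner, recovering a cocycle conjugacy between $\alpha^L$ and $\alpha^{L'}$ themselves.

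The crux --- and the step I expect to resist --- is showing that the equivariant $\KK$-class separating the Rokhlin duals is a complete function of $\beta_0$ alone. That it detects $\beta_0$ is forced by the easy direction; the content is that it detects nothing more, i.e.\ that $\beta_0(L)=\beta_0(L')$ already produces an equivariant $\KK$-equivalence between $(\cO_2,\alpha^L)$ and $(\cO_2,\alpha^{L'})$. This is genuinely delicate because the two flows carry different KMS states: restricted to the gauge-invariant UHF core, the KMS state is the product state with weights $(e^{-\beta_0 L_1},e^{-\beta_0 L_2})$, and these weights move along the level curve $\{e^{-\beta_0 L_1}+e^{-\beta_0 L_2}=1\}$, so neither the flip $s_1\leftrightarrow s_2$ nor any retreat to the (rationally dependent, hence forbidden) symmetric point $L_1=L_2$ is available. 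I would confront this by computing the equivariant invariant directly, rewriting it through Baaj--Skandalis duality and the Connes--Thom isomorphism in terms of the dual-equivariant $K$-theory of $B_L\cong\cO_2\otimes\mathcal K$ and arguing that the surviving datum is precisely $\beta_0$; alternatively, one might bypass $\KK$ entirely and build the intertwining cocycle conjugacy by hand through a Rokhlin-type approximate-intertwining argument in the spirit of Kishimoto, once the matching of $\beta_0$ is in place. The final genericity statement is then obtained by intersecting, over $L$ and $L'$, the Diophantine sets on which the Rokhlin property and equivariant $\Z$-stability hold.
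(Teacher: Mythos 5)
Your overall strategy --- classify the dual flows and transfer back through Takai duality --- is exactly the paper's, but there is a fatal error at the step where you identify the crossed product. You claim $B_L=\cO_2\rtimes_{\alpha^L}\R$ is a stable Kirchberg algebra, so that Kirchberg--Phillips gives $B_L\cong\cO_2\otimes\mathcal K$, and you then invoke Szab\'o's classification of Rokhlin flows on Kirchberg algebras. This confuses the two branches of Kishimoto's dichotomy: it is the \emph{different-sign} (Rokhlin) case in which the crossed product is a Kirchberg algebra. Under your hypotheses ($L_1,L_2$ rationally independent with the \emph{same} sign), $\alpha^L$ has a unique KMS state at inverse temperature $\beta$, and this KMS state induces a densely defined lower semicontinuous trace on $B_L$ which the dual flow scales by $e^{-\beta t}$ (this is \cite[Theorem 4.1]{KishKumj96}, used in the paper). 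So $B_L$ is simple, stably projectionless and stably finite with a unique trace up to scalar --- generically it is $\mathcal W\otimes\mathcal K$, not $\cO_2\otimes\mathcal K$ --- and pure infiniteness fails outright. Note also that your framework is internally inconsistent: since $K_*(B_L)=0$ and $B_L$ satisfies the UCT, if $B_L$ really were a Kirchberg algebra then all the dual Rokhlin flows would be mutually cocycle conjugate by Szab\'o's Kirchberg-algebra theorem, hence (by your own transfer-back step) all same-sign rationally independent $\alpha^L$ would be cocycle conjugate irrespective of $\beta$, contradicting the easy direction you proved first.

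Repairing this dissolves the step you flagged as the ``crux.'' The correct input is Szab\'o's Theorem C of \cite{SzaRokhlinFlows}, which classifies Rokhlin flows on classifiable stably projectionless monotracial algebras: there the complete invariant is not an equivariant $\KK$-class but precisely the trace-scaling factor of the flow, which for $\widehat{\alpha^L}$ is $e^{-\beta t}$ with $e^{-\beta L_1}+e^{-\beta L_2}=1$. So no computation of equivariant $\KK$ is needed or even meaningful here; once one knows (i) $B_L$ is classifiable (via generic equivariant $\Z$-stability of $\alpha^L$, proved in the paper through approximate divisibility of the fixed-point algebras at rational ratios $L_2/L_1$) and (ii) $\widehat{\alpha^L}$ has the Rokhlin property generically (proved via the duality of \cite{CNS25} with pointwise strong approximate innerness of $\alpha^L$, again established first at rational ratios), Szab\'o's theorem hands you cocycle conjugacy of the duals exactly when the $\beta$'s agree, and Proposition \ref{prop: CocycleConj} (using $K_0(\cO_2\otimes\mathcal K)=0$) transfers this back. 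A smaller discrepancy: the generic set is not cut out by a Diophantine (badly-approximable) condition as you predict; it is a dense $G_\delta$ built from neighbourhoods of the \emph{rational} ratios, where the relevant approximate properties hold exactly, so it is comeagre but there is no reason for it to be co-null.
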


Following the notation in Theorem \ref{thm: IntroGenericClassif}, the strategy is to show that the $\mathbb{R}$-$\C$-algebra $(\cO_2\rtimes_{\alpha^L}\mathbb{R},\widehat{\alpha^L})$ generically falls under the umbrella of \cite[Theorem C]{SzaRokhlinFlows}. In particular, we will classify the corresponding dual actions.
This strategy has been successfully implemented in the von Neumann setting \cite{MaTo16} and, although not stated explicitly in this form, the approach of classifying dual actions instead also appeared in work of Bratteli and Kishimoto. Indeed, they managed to classify certain circle actions on Kirchberg algebras by classifying the dual $\mathbb{Z}$-actions \cite[Corollary 4.1]{BratKish}.

The two main obstacles to apply \cite[Theorem C]{SzaRokhlinFlows} are showing that the crossed product $\cO_2\rtimes_{\alpha^L}\mathbb{R}$ is classifiable and that the dual action $\widehat{\alpha^L}$ has the Rokhlin property. 
First, we will strengthen the result in \cite[Corollary 7.0.3]{Rob12} and show that a large class of quasi-free flows on Cuntz algebras $\cO_n$ are generically equivariantly $\Z$-stable.

\begin{theorem}\label{thm: IntroEquivZStable}[Theorem \ref{thm: EquivZStable}]
\begin{enumerate}[label=\textit{(\roman*)}]
\item The quasi-free flows on $\cO_2$ are generically equivariantly $\Z$-stable.
\item For $p,q\in\mathbb{R}$ and $n\geq 3$, consider the flow $\alpha^{(p,q)}$ on $\mathcal{O}_n$ given by $\alpha^{(p,q)}_t(s_k)=e^{it(p+(k-1)q)}s_k$ for any $t\in\R$ and $k=1,\ldots,n$.
The quasi-free flows $\alpha^{(p,q)}$ are generically equivariantly $\Z$-stable.
\end{enumerate}
\end{theorem}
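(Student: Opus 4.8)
The plan is to establish equivariant $\Z$-stability through the central-sequence picture. Since $(\Z,\mathrm{id}_\R)$ is strongly self-absorbing as an $\R$-$\C$-dynamical system, $(\cO_n,\alpha)$ is equivariantly $\Z$-stable exactly when there is a unital $*$-homomorphism from $\Z$ into the fixed-point algebra $F_\alpha(\cO_n)^{\tilde\alpha}$ of the induced flow on the equivariant central sequence algebra. As the source carries the trivial flow, this amounts to producing, for each $m$, approximately central, approximately multiplicative, approximately order-zero maps $M_m\to\cO_n$ that are in addition \emph{asymptotically $\alpha$-invariant} (so that $\sup_{|t|\le K}\|\alpha_t(x_k)-x_k\|\to 0$ for the relevant elements $x_k$) and meet the comparison/size requirement of the Rørdam--Winter type characterisation. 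Thus the whole problem reduces to building asymptotically flow-invariant central sequences in $\cO_n$ that realise $\Z$.

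First I would localise these sequences inside the UHF core. Writing $\mathcal{F}_n=\overline{\bigcup_m\mathrm{span}\{s_Is_J^*:|I|=|J|=m\}}\cong M_{n^{\infty}}$ and $\Phi(x)=\sum_k s_kxs_k^*$ for the canonical shift endomorphism, the identity $s_kx=\Phi(x)s_k$ gives $[s_k,x]=(\Phi(x)-x)s_k$; hence an element of $\mathcal{F}_n$ that is central in $\mathcal{F}_n$ and approximately $\Phi$-invariant is automatically central in $\cO_n$, providing a sufficient supply of central sequences. On the other hand $\alpha$ restricts to $\mathcal{F}_n$ as the product-type flow $\bigotimes\Ad(e^{ith})$ with $h=\mathrm{diag}(L_1,\dots,L_n)$, and one checks $\alpha_t\circ\Phi=\Phi\circ\alpha_t$; consequently genuine flow-invariance is captured by the fixed-point algebra $\mathcal{F}_n^{\alpha}$, which at level $m$ is the commutant of $H_m=\sum_{j\le m}1^{\otimes(j-1)}\otimes h\otimes 1^{\otimes(m-j)}$. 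Recording that $\Phi$ restricts to an endomorphism of the AF algebra $\mathcal{F}_n^{\alpha}$, everything reduces to the pair $(\mathcal{F}_n^{\alpha},\Phi|_{\mathcal{F}_n^{\alpha}})$.

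The core of the argument is then to build approximately $\Phi$-invariant central sequences inside $\mathcal{F}_n^{\alpha}$ generating a copy of $\Z$, and this is where genericity enters. For $(\mathrm{i})$ one takes $L_1,L_2$ rationally independent (a comeager condition in $\R^2$, the complement being a countable union of lines), and for $(\mathrm{ii})$ one takes $p,q$ rationally independent (comeager in $\R^2$). In both cases the resonances $\sum_{i\in I}L_i=\sum_{j\in J}L_j$ with $|I|=|J|=m$ are governed by an explicit combinatorial rule — equality of the index multisets for $(\mathrm{i})$, and equality of the index sums $\sum_{i\in I}i=\sum_{j\in J}j$ for the arithmetic-progression spectrum of $(\mathrm{ii})$ (the $p$-contributions cancelling when $|I|=|J|$) — so that $\mathcal{F}_n^{\alpha}\cap M_n^{\otimes m}$ is a direct sum of matrix blocks whose sizes (binomial coefficients in case $(\mathrm{i})$, extended binomial coefficients in case $(\mathrm{ii})$) grow with $m$. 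This growing-block structure, together with the density of the flow's spectrum in $\R$ guaranteed by rational independence, provides room to spread approximate matrix units across many tensor factors and place them in $\Phi$-almost-invariant position while staying inside $\mathcal{F}_n^{\alpha}$, thereby producing the large approximately central order-zero maps that force $\Z$-absorption. A Baire-category bookkeeping then shows that the parameter set on which these uniform estimates succeed is comeager, and part $(\mathrm{ii})$ runs in parallel once the weight combinatorics replaces the multiset combinatorics of part $(\mathrm{i})$.

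The step I expect to be the main obstacle is precisely this last construction: reconciling asymptotic $\Phi$-invariance, which pushes the matrix units out across arbitrarily many tensor factors, with exact $\alpha$-invariance, which confines them to the resonant subalgebra $\mathcal{F}_n^{\alpha}$, all while meeting the size condition needed to absorb $\Z$ and keeping every estimate uniform over a comeager set of parameters. Making ``approximately invariant'' quantitative and stable under the inductive limit requires controlling the small-denominator behaviour of the resonances, and this is exactly where rational independence does its real work; it is also what upgrades \cite[Corollary 7.0.3]{Rob12} from a statement about fixed parameters to a generic one.
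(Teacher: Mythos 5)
Your set-up (the central-sequence characterisation of equivariant $\Z$-stability, and the localisation to the UHF core with the shift endomorphism $\lambda(x)=\sum_k s_kxs_k^*$) matches the paper's toolkit, but your genericity mechanism inverts the role of rationality, and this is a genuine gap rather than a different route. You identify the generic parameter set with the (comeager) set of \emph{rationally independent} parameters and propose to build the flow-invariant, approximately central copies of $\Z$ directly inside the fixed-point algebras arising there (GICAR-type algebras with growing binomial blocks). But for a fixed rationally independent pair, nobody knows how to carry out that construction: the step you yourself flag as ``the main obstacle'' --- reconciling approximate $\lambda$-invariance with exact $\alpha$-invariance against small-denominator resonances --- is precisely the unresolved core of Kishimoto's question, and your proposal offers no mechanism for it. If your construction worked at every rationally independent parameter, you would obtain equivariant $\Z$-stability (not just generically) for all such flows, which the paper explicitly leaves open. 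Moreover, in the rationally independent case the fixed-point algebra is the GICAR algebra (respectively its generalisation), which is \emph{not} simple and has a large trace simplex, so the comparison arguments needed to produce a unital copy of $M_2\oplus M_3$ there are unavailable; your appeal to ``growing-block structure provides room'' does not substitute for them.

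The paper's proof of Theorem \ref{thm: EquivZStable} anchors at the opposite end: for \emph{rationally dependent} parameters ($L_1/L_2\in\mathbb{Q}_+$, resp.\ $p/q\in\mathbb{Q}_+$), the fixed-point algebra is a simple, unital, infinite-dimensional, monotracial AF algebra by \cite[Theorem 4.2]{BJO04}, and strict comparison plus an explicit trace computation yields a unital $*$-homomorphism $M_2\oplus M_3\to\cO_2^{\alpha^L}$ (Proposition \ref{prop: FixedPtRational}). This is pushed into the $\alpha^L$-fixed part of the central sequence algebra via Lemma \ref{lemma: CentralSeqO2}, whose equivariance rests on Kishimoto's unitaries $v_n$ lying in the GICAR subalgebra, which by Lemma \ref{lemma: GicarSubalg} is fixed by \emph{every} flow in the family. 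Genericity then comes from a Baire-category argument anchored at the rationals: exact witnesses at each rational ratio $q$ give approximate witnesses (approximate matrix units, approximately central on a finite set $\mathcal{F}$, approximately $\alpha_t$-fixed for $|t|\le T$) on an open neighbourhood $V(q,\mathcal{F},\varepsilon,T)$, and intersecting the resulting dense open sets produces a dense $G_\delta$ containing $\mathbb{Q}_+$, whose irrational part is the advertised second-category set; Lemma \ref{lemma: ApproxDiv} converts divisibility into $\Z$-stability. Your ``Baire-category bookkeeping'' has nothing analogous to anchor to: Baire category only helps once one has a dense family of parameters with exact (or uniformly controlled) witnesses together with open approximate conditions, and in your scheme that dense family would have to be the rationally independent parameters themselves --- exactly where the construction is missing.
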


Showing that the dual actions have the Rokhlin property generically is the most challenging part of the proof of Theorem \ref{thm: IntroGenericClassif}. 
In fact, this holds for more general flows on Cuntz algebras $\cO_n$ with $n\geq 2$, so we will record it separately. 
In the case $n=2$, the following theorem offers a partial answer to \cite[Question 6.17]{SzaRokhlinFlows}.

\begin{theorem}\label{thm: IntroRokhlin}[Theorem \ref{thm: GenericRokh}]
\begin{enumerate}[label=\textit{(\roman*)}]
\item The Rokhlin property is generic among the dual actions induced by the quasi-free flows on $\cO_2$.
\item The Rokhlin property is generic among the dual actions induced by the quasi-free flows $\alpha^{(p,q)}$ on Cuntz algebras $\cO_n$ for $n\geq 3$.
\end{enumerate}
\end{theorem}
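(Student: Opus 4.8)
The plan is to produce the Rokhlin eigenunitaries of the dual flow by hand and then to promote the set of good parameters to a comeager one via a Baire category argument. Write $B=\cO_2\rtimes_{\alpha^L}\mathbb{R}$, let $(\lambda_t)_{t\in\R}$ be the canonical unitaries implementing $\alpha^L$, so that the dual flow fixes $\cO_2$ pointwise and satisfies $\widehat{\alpha^L}_s(\lambda_t)=e^{ist}\lambda_t$. I would use the reformulation that $\widehat{\alpha^L}$ has the Rokhlin property iff, for some $p\neq 0$, the flow induced by $\widehat{\alpha^L}$ on the (equivariant) central sequence algebra $B_\infty\cap B'$ carries a unitary eigenvector of eigenvalue $p$. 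The starting point is that $\lambda_1$ is already an honest eigenunitary, $\widehat{\alpha^L}_s(\lambda_1)=e^{is}\lambda_1$; its only defect is that it is very far from central. I would therefore search for Rokhlin eigenunitaries of the shape $v=z\lambda_1$ with $z\in\cO_2$ a unitary, which keeps the eigenvalue intact because $\cO_2$ is $\widehat{\alpha^L}$-fixed.

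A direct computation reduces the (approximate) centrality of $v=z\lambda_1$ to two requirements on $z$: commutation with $\cO_2$ amounts to $z\alpha^L_1(a)z^*\approx a$, i.e. $\Ad(z)\to\alpha^L_{-1}$, while commutation with the $\lambda_t$ amounts to $\alpha^L_t(z)\approx z$. In other words, $\widehat{\alpha^L}$ has the Rokhlin property exactly when $\alpha^L$ is approximately representable: there exist approximately central, approximately $\alpha^L$-invariant unitaries $z_n\in\cO_2$ with $\Ad(z_n)\to\alpha^L_{-1}$ in the point-norm topology. (This is the flow analogue of the Rokhlin/approximate-representability duality familiar from the finite abelian group case.) The same reduction applies verbatim to the flows $\alpha^{(p,q)}$ on $\cO_n$.

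Two ingredients feed the construction of the $z_n$. First, the time-one automorphism is approximately inner: for $\cO_2$ this is automatic, since $K_*(\cO_2)=0$ forces $\KK(\cO_2,\cO_2)=0$ and hence every automorphism of $\cO_2$ is approximately unitarily equivalent to the identity; for $\cO_n$ with $n\ge 3$ the quasi-free automorphism $\alpha^{(p,q)}_{-1}$ is homotopic to the identity through the flow, so it acts trivially on $K_0(\cO_n)=\mathbb{Z}/(n-1)$, is $\KK$-trivial, and is again approximately inner by the Kirchberg--Phillips uniqueness theorem. This produces unitaries $z_n$ with $\Ad(z_n)\to\alpha^L_{-1}$, with the generic equivariant $\Z$-stability of Theorem \ref{thm: IntroEquivZStable} providing the room to run the uniqueness and absorption arguments. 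The $z_n$ obtained this way are, however, not flow-invariant.

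The hard part is to arrange the implementing unitaries to be, in addition, approximately $\alpha^L$-invariant, and this is precisely where the genericity of the parameters is spent. I would decompose the $z_n$ along the spectral subspaces of $\alpha^L$, whose frequencies lie in the subgroup of $\R$ generated by $L_1,L_2$; rational independence makes this subgroup dense, and for a comeager set of $L$ one obtains quantitative (Diophantine) approximation of the relevant target frequencies by this subgroup, which is exactly what lets one perturb $z_n$ into approximately invariant unitaries while preserving $\Ad(z_n)\to\alpha^L_{-1}$, the resulting errors being absorbed into a $\Z$-stable tensor factor. Finally, to pass from this dense family of parameters to a comeager one, I would write the Rokhlin property as a countable intersection $\bigcap_m U_m$, where $U_m$ collects the parameters admitting a unitary that solves the eigenvalue and centrality relations to tolerance $1/m$ on prescribed finite sets; openness of each $U_m$ follows from the continuity of the crossed products and of the construction in $L$ (they assemble into a continuous field of $\R$-$\C$-algebras), while density of each $U_m$ is supplied by the explicit construction at Diophantine parameters. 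I expect this invariant-implementation step, together with the bookkeeping needed to make the whole construction continuous in $L$, to be the main technical obstacle; the case of $\alpha^{(p,q)}$ on $\cO_n$ runs along identical lines, the frequencies now being generated by $p+(k-1)q$.
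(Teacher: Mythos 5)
Your opening reduction is correct and is in fact the same duality the paper uses: by \cite[Corollary 3.8]{CNS25}, the dual flow has the Rokhlin property if and only if $\alpha^L$ is pointwise strongly approximately inner (Definition \ref{defn: ApproxInner}), i.e.\ the implementing unitaries for each $\alpha^L_t$ can be chosen approximately $\alpha^L$-invariant; your computation with $v=z\lambda_1$ is a hands-on derivation of this. Your closing Baire category scaffolding also matches the paper's proof of Theorem \ref{thm: GenericRokh}. The gap is the middle step, and it is the entire mathematical content of the theorem: you give no actual mechanism for converting the unitaries $z_n$ with $\Ad(z_n)\to\alpha^L_{-1}$ (which exist for soft $K$-theoretic/Kirchberg--Phillips reasons) into unitaries that are in addition approximately flow-invariant. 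The proposed mechanism --- spectral decomposition of $z_n$ along frequencies in the group generated by $L_1,L_2$, a quantitative approximation hypothesis on the parameters, and absorption of errors ``into a $\Z$-stable tensor factor'' --- is an assertion, not an argument. Equivariant $\Z$-stability lets one absorb central-sequence errors, but the failure of invariance of $z_n$ is not such an error, and no known perturbation technique upgrades approximate innerness to \emph{strong} approximate innerness under a Diophantine/Liouville condition. This distinction is exactly the substance of Kishimoto's dichotomy in \cite{Kish02}: for rationally independent $L_1,L_2$ of the same sign it remains open whether the flow is pointwise strongly approximately inner at any \emph{specific} such parameter, which is precisely why the paper only obtains a generic statement.

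The paper circumvents this obstacle by a structurally different route which never works at an irrational parameter directly: for \emph{rationally dependent} parameters ($L_1/L_2\in\mathbb{Q}_+$) it produces implementing unitaries that are \emph{exactly} invariant, by locating them inside the fixed-point algebra. Writing $\alpha^L_t(s_k)=u_ts_k$ with $u_t=e^{itL_1}s_1s_1^*+e^{itL_2}s_2s_2^*\in\cO_2^{\alpha^L}$, it suffices to solve $u_t\approx v\lambda(v^*)$ with $v$ a unitary in $\cO_2^{\alpha^L}$ and $\lambda$ the canonical endomorphism, for then $vs_kv^*\approx u_ts_k=\alpha_t^L(s_k)$. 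This approximate cohomology-vanishing is obtained by passing to the stationary inductive limit $(A,\gamma)$ of $(\cO_2^{\alpha^L},\lambda)$ --- a simple, unital, monotracial AF algebra (Proposition \ref{prop: FixedPtRational}) with an automorphism $\gamma$ extending $\lambda$ --- proving that $\gamma$ is strongly outer and equivariantly approximately divisible, hence has the Rokhlin property by Lemma \ref{lemma: RokhlinLemmaGabor}, and then invoking Kishimoto's one-cocycle results from \cite{RohlinShiftsKish00}; this is Theorem \ref{thm: RokhlinRationalQFree}. In the Baire category argument, density of the good set of parameters therefore comes from the rationals, where the exact property holds, and not from any Diophantine-type family of irrationals; nothing is ever proved at a fixed irrational parameter. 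To salvage your approach you would have to prove your invariantization step on some dense set of parameters, and that is precisely the problem the paper's detour through rationally dependent flows is designed to avoid.
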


To prove the theorem above, we use a duality result from \cite{CNS25} which yields that it is enough to show that the quasi-free flows are generically \emph{pointwise strongly approximately inner} (see Definition \ref{defn: ApproxInner}), thus obtaining a $\C$-counterpart of \cite[Lemma 6.45]{MaTo16}.
The main technical innovation lies in establishing a general framework that subsumes methods introduced by Kishimoto in \cite{Kish02}. Precisely, he shows that the right shift on $\bigotimes_{n\in\mathbb{N}}M_2$ has a Rokhlin-like property by considering the enveloping $\mathbb{Z}$-$\C$-algebra $\bigotimes_{n\in\mathbb{Z}}M_2$ equipped with the right shift. In our more general setting, we show that the canonical endomorphism of $\cO_n$, when restricted to the fixed-point algebras of the quasi-free flows with rationally dependent parameters, has a Rokhlin-like property. Considering the stationary inductive limit of each fixed-point algebra together with the canonical endomorphism, we get a simple, monotracial AF-algebra, together with an automorphism which extends said canonical endomorphism. In analogy with the shift on the CAR algebra (see \cite{BKSR93,KishShifts96,RohlinShiftsKish00}), this automorphism turns out to have the Rokhlin property.

Finally, we would like to point out that Theorems \ref{thm: IntroEquivZStable} and \ref{thm: IntroRokhlin} provide a clear path for generically classifying quasi-free flows on $\cO_n$, provided one extends Szabo's classification result from \cite[Theorem C]{SzaRokhlinFlows}.

\subsection*{Acknowledgements}
The author was supported by the EPSRC grant EP/R513295/1 and by the European Research Council under the European Union's Horizon Europe research and innovation programme (ERC grant AMEN-101124789). The author would like to thank Stuart White, Gábor Szabó, and Chris Schafhauser for helpful conversations, and Gábor Szabó for showing him a proof of Lemma \ref{lemma: RokhlinLemmaGabor}.

For the purpose of open access, the author has applied a CC BY public copyright license to any author accepted manuscript version arising from this submission.
\allowdisplaybreaks

\section{Preliminaries}
\numberwithin{theorem}{section}

\subsection{Sequence algebras}

Given a separable $\C$-algebra $A$, we denote by $A_\infty$ the $\C$-algebra obtained as the quotient of the bounded sequences of $A$ by the null sequences. In particular, we will realise $A$ as a subalgebra of $A_\infty$ by viewing elements in $A$ as constant sequences in $A_\infty$. Then, the central sequence algebra of $A$, denoted by $A_\infty\cap A'$ is the set of elements $x\in A_\infty$ such that $xa=ax$ for any $a\in A$. However, if $A$ is non-unital, we will consider the corrected central sequence algebra, as defined by Kirchberg in \cite{KirchbergAbel}. Precisely, let $$A_\infty\cap A^\perp=\{x\in A_\infty\colon xa=ax=0, \ \forall a\in A\}$$ be the two-sided annihilator of $A$ inside $A_\infty$, and define the corrected central sequence algebra by $(A_\infty\cap A')/(A_\infty\cap A^\perp)$. This will be denoted by $F_\infty(A)$.

With the set-up above, let $G$ be a second-countable, locally compact group and $\alpha\colon G\curvearrowright A$ be a continuous action. In this case, we will say that $(A,\alpha)$ is a $G$-$\C$-algebra. Then, we get a (not necessarily continuous) $G$-action $\alpha_\infty$ on $A_\infty$. Similarly, there is an induced action on $F_\infty(A)$ which will be denoted by $\tilde{\alpha}_\infty$. For ease of notation, the induced action on $A_\infty\cap A'$ will still be denoted by $\alpha_\infty$. These actions are in general not continuous, so we restrict to the continuous part when necessary. For instance, we consider $$A_{\infty,\alpha} = \{x \in A_\infty \colon [g \to \alpha_{\infty,g} (x)]\ \text{is norm-continuous}\}.$$ Likewise, we will write $F_{\infty,\alpha}(A)$ for the continuous part of $F_\infty(A)$.

\subsection{Dual actions}

Let $A$ be a $\C$-algebra, $G$ be a second-countable, locally compact, abelian group, and $\alpha\colon G\curvearrowright A$ be a continuous action. For each $\chi\in\widehat{G}$, define a $*$-isomorphism $\widehat{\alpha}_\chi\colon C_c(G,A)\to C_c(G,A)$ by $\widehat{\alpha}_\chi(f)(g) = \overline{\chi(g)}f(g).$
Then $\widehat{\alpha}_\chi$ extends to
an automorphism of the crossed product $A\rtimes_\alpha G$, and we get a continuous homomorphism $\widehat{\alpha}\colon \widehat{G}\to \Aut(A\rtimes_\alpha G).$ We will call $\widehat{\alpha}$ the \emph{dual action}. By Takai's duality \cite{Takai}, we have a canonical equivariant isomorphism \[((A\rtimes_\alpha G)\rtimes_{\widehat{\alpha}}\widehat{G},\widehat{\widehat{\alpha}})\cong (A\otimes\mathcal{K}(L^2(G)), \alpha\otimes\Ad\rho),\]where $\rho$ stands for the right-regular representation of the group $G$. We invite the reader to consult \cite[Chapter 7]{WilliamsBook} for a more detailed exposition.

We will now define an equivalence relation for group actions. In particular, we will show how equivalence of dual actions imply an equivalence of the initial actions.

\begin{defn}\label{defn: CocycleConj}
Let $G$ be a second-countable, locally compact group, and let $(A,\alpha),(B,\beta)$ be $G$-$\C$-algebras. Then $(A,\alpha)$ and $(B,\beta)$ are said to be cocycle conjugate if there exists an isomorphism $\varphi\colon A\to B$ together with a strictly continuous map $\mathbbm{u}\colon G\to\mathcal{U}(\mathcal{M}(B))$ such that $$\mathbbm{u}_{gh}=\mathbbm{u}_g\beta_g(\mathbbm{u}_h)$$ and $$\varphi\circ\alpha_g=\Ad(\mathbbm{u}_g)\circ\beta_g\circ\varphi$$ for any $g,h\in G$. If $\mathbbm{u}_g=1$ for any $g\in G$, then $(A,\alpha)$ and $(B,\beta)$ are said to be conjugate.
\end{defn}

In this article, we aim to obtain cocycle conjugacy results for actions by appealing to cocycle conjugacy of their corresponding dual actions. This is in the spirit of \cite[Corollary 4.2]{BratKish} (see also \cite[Proposition 2.9]{GardellaUnpublished}).

\begin{prop}\label{prop: CocycleConj}
Let $A,B$ be $\C$-algebras, $G$ be a second-countable, locally compact, abelian group, and $\alpha\colon G\curvearrowright A$ and $\beta\colon G\curvearrowright B$ be continuous actions. Then the following statements hold.
\begin{enumerate}[label=\textit{(\roman*)}]
    \item If $(A,\alpha)$ and $(B,\beta)$ are cocycle conjugate, then $(A\rtimes_\alpha G,\widehat{\alpha})$ and $(B\rtimes_\beta G,\widehat{\beta})$ are conjugate.\label{item: Cocycle1}
    \item If $(A\rtimes_\alpha G,\widehat{\alpha})$ and $(B\rtimes_\beta G,\widehat{\beta})$ are cocycle conjugate, then $(A\otimes\mathcal{K}(L^2(G)),\alpha\otimes\Ad\rho)$ and $(B\otimes\mathcal{K}(L^2(G)),\beta\otimes\Ad\rho)$ are conjugate.\label{item: Cocycle2}
    \item Assume further that $A$ and $B$ are unital, simple, separable, purely infinite and let $p$ be a minimal projection in $\mathcal{K}(L^2(G))$. If $(A\otimes\mathcal{K}(L^2(G)),\alpha\otimes\Ad\rho)$ and $(B\otimes\mathcal{K}(L^2(G)),\beta\otimes\Ad\rho)$ are conjugate via an isomorphism \[\varphi\colon A\otimes\mathcal{K}(L^2(G)) \to B\otimes\mathcal{K}(L^2(G))\] which satisfies $K_0(\varphi)([1_A\otimes p]_0)=[1_B\otimes p]_0$, then $(A,\alpha)$ and $(B,\beta)$ are cocycle conjugate.\label{item: Cocycle3}
\end{enumerate}
\end{prop}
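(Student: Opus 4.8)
The plan is to treat the three parts in order, with \ref{item: Cocycle1} and \ref{item: Cocycle2} being essentially formal and \ref{item: Cocycle3} carrying the real content. For \ref{item: Cocycle1}, I would factor the cocycle conjugacy into two pieces: the isomorphism $\varphi\colon A\to B$ is an honest conjugacy between $\alpha$ and the exterior-equivalent action $\beta^{\mathbbm{u}}_g:=\Ad(\mathbbm{u}_g)\circ\beta_g$, and $\beta^{\mathbbm{u}}$ is exterior equivalent to $\beta$. Since a conjugacy functorially induces an isomorphism $A\rtimes_\alpha G\cong B\rtimes_{\beta^{\mathbbm{u}}}G$ sending implementing unitaries to implementing unitaries, and the dual action only twists the group variable, this isomorphism automatically intertwines $\widehat{\alpha}$ and $\widehat{\beta^{\mathbbm{u}}}$. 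It then remains to produce a dual-equivariant isomorphism $(B\rtimes_{\beta^{\mathbbm{u}}}G,\widehat{\beta^{\mathbbm{u}}})\cong(B\rtimes_\beta G,\widehat{\beta})$. Writing $\lambda^\beta$ for the canonical unitaries implementing $\beta$ in $\mathcal{M}(B\rtimes_\beta G)$, the elements $w_g:=\mathbbm{u}_g\lambda^\beta_g$ satisfy $w_gbw_g^*=\beta^{\mathbbm{u}}_g(b)$ and, by the cocycle identity for $\mathbbm{u}$, the relation $w_gw_h=w_{gh}$; hence fixing $B$ pointwise and sending the implementing unitaries of $\beta^{\mathbbm{u}}$ to the $w_g$ defines the desired isomorphism. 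As the dual action fixes $B$, and therefore each $\mathbbm{u}_g$, while scaling $\lambda^\beta_g$ by $\overline{\chi(g)}$, a direct check confirms that it intertwines $\widehat{\beta^{\mathbbm{u}}}$ and $\widehat{\beta}$.

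Part \ref{item: Cocycle2} is then immediate. I would apply \ref{item: Cocycle1} to the dual $\widehat{G}$-$\C$-algebras $(A\rtimes_\alpha G,\widehat{\alpha})$ and $(B\rtimes_\beta G,\widehat{\beta})$, which are assumed cocycle conjugate, to conclude that their crossed products by the bidual actions are conjugate, and then invoke Takai duality as recalled above to identify these with $(A\otimes\mathcal{K}(L^2(G)),\alpha\otimes\Ad\rho)$ and $(B\otimes\mathcal{K}(L^2(G)),\beta\otimes\Ad\rho)$.

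The substance lies in \ref{item: Cocycle3}, where I must descend a conjugacy of the stabilised systems to a cocycle conjugacy of the originals. The obstruction is that the natural corner $e_A:=1_A\otimes p$ is \emph{not} invariant under $\alpha\otimes\Ad\rho$, since $\Ad\rho_g$ moves $p$. The key preliminary observation is that the unitaries $1\otimes\rho_g\in\mathcal{M}(A\otimes\mathcal{K}(L^2(G)))$ form a cocycle witnessing an exterior equivalence $\alpha\otimes\Ad\rho\sim\alpha\otimes\id$, and likewise for $\beta$. Composing the given conjugacy $\varphi$ with these two exterior equivalences turns it into a cocycle conjugacy $(\varphi,\mathbbm{w})$ between the trivially stabilised systems $(A\otimes\mathcal{K}(L^2(G)),\alpha\otimes\id)$ and $(B\otimes\mathcal{K}(L^2(G)),\beta\otimes\id)$; crucially the underlying map is still $\varphi$, so $K_0(\varphi)([e_A]_0)=[e_B]_0$ persists, where $e_B:=1_B\otimes p$. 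Now $e_A$ and $e_B$ \emph{are} invariant, and their corners recover $(A,\alpha)$ and $(B,\beta)$ respectively.

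To finish, I would use that $B\otimes\mathcal{K}(L^2(G))$ is simple and purely infinite, as $B$ is, so that the equality $[\varphi(e_A)]_0=[e_B]_0$ forces a partial isometry $s$ with $s^*s=\varphi(e_A)$ and $ss^*=e_B$. Compressing yields an isomorphism $\psi:=s\,\varphi(\cdot)\,s^*\colon A\to B$ of the invariant corners, and I would set $\mathbbm{v}_g:=s\,\mathbbm{w}_g\,(\beta_g\otimes\id)(s^*)$. The main calculation is then to verify, using $s^*s=\varphi(e_A)$, the identity $(\beta_g\otimes\id)(\varphi(e_A))=\mathbbm{w}_g^*\varphi(e_A)\mathbbm{w}_g$ coming from the cocycle-conjugacy relation together with the invariance of $e_A$, and the cocycle identity for $\mathbbm{w}$, that each $\mathbbm{v}_g$ is a unitary in $B$, that $g\mapsto\mathbbm{v}_g$ is a strictly continuous $\beta$-cocycle, and that $\psi\circ\alpha_g=\Ad(\mathbbm{v}_g)\circ\beta_g\circ\psi$. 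This exhibits $(\psi,\mathbbm{v})$ as a cocycle conjugacy between $(A,\alpha)$ and $(B,\beta)$. I expect this final bookkeeping---absorbing the non-equivariance of $s$ into the cocycle $\mathbbm{v}$---to be the only genuinely delicate point, with the existence of $s$, and hence the role of the $K_0$-hypothesis and of pure infiniteness, being the conceptual crux.
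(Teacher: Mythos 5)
Your proposal is correct in all three parts. For \ref{item: Cocycle1} and \ref{item: Cocycle2} it is essentially the paper's argument: the paper defines in one stroke the map $\zeta\mapsto[g\mapsto\mathbbm{u}_g\theta(\zeta(g))]$ on $C_c(G,A)$, which is (up to the side on which the cocycle multiplies) the composite of your two maps, and \ref{item: Cocycle2} is Takai duality in both treatments; if anything, your factorisation through $(B\rtimes_{\beta^{\mathbbm{u}}}G,\widehat{\beta^{\mathbbm{u}}})$ is the safer formulation, since the integrated form of the covariant pair $(\theta,\,g\mapsto\mathbbm{u}_g\lambda^\beta_g)$ places the cocycle to the right of $\theta(\zeta(g))$, which is the ordering that is actually multiplicative for the standard convolution conventions. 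The genuine divergence is in \ref{item: Cocycle3}, where the conceptual crux is the same (the $K_0$ hypothesis plus simplicity and pure infiniteness force an equivalence between $\varphi(1_A\otimes p)$ and $1_B\otimes p$) but the mechanics differ. The paper stays with the twisted action $\beta\otimes\Ad\rho$: it chooses a \emph{unitary} $v$ in the multiplier algebra (in fact the minimal unitisation) of $B\otimes\mathcal{K}(L^2(G))$ with $\Ad(v)(\varphi(1_A\otimes p))=1_B\otimes p$, corrects $\varphi$ to $\Ad(v)\circ\varphi=\sigma\otimes\id_{\mathcal{K}}$ for an isomorphism $\sigma\colon A\to B$, and reads off the cocycle as the coboundary $\mathbbm{u}_g=v(\beta_g\otimes\Ad\rho_g)(v^*)$, whose cocycle identity is automatic. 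You instead first untwist $\Ad\rho$ via the exterior equivalence implemented by $1\otimes\rho_g$ (so that the corner projections become invariant) and then compress along a \emph{partial isometry} $s$ with $s^*s=\varphi(1_A\otimes p)$, $ss^*=1_B\otimes p$, absorbing its non-equivariance into $\mathbbm{v}_g=s\mathbbm{w}_g(\beta_g\otimes\id)(s^*)$. Your route buys three things: (a) you only need Murray--von Neumann equivalence of projections inside $B\otimes\mathcal{K}(L^2(G))$ itself (Cuntz's theorem for simple purely infinite algebras), not a unitary in the multiplier algebra; (b) you avoid the step, left implicit in the paper, that an isomorphism of stabilisations carrying $1_A\otimes p$ to $1_B\otimes p$ can be corrected to one of the product form $\sigma\otimes\id_{\mathcal{K}}$; (c) your $\mathbbm{v}_g$ lies in the corner $B\otimes p\cong B$ for the cheap reason that $\mathbbm{v}_g^*\mathbbm{v}_g=\mathbbm{v}_g\mathbbm{v}_g^*=1_B\otimes p$, whereas the paper's $\mathbbm{u}_g$ lies in $B$ only after one observes that $\Ad(\mathbbm{u}_g)$ acts as $(\sigma\circ\alpha_g\circ\sigma^{-1}\circ\beta_g^{-1})\otimes\id_{\mathcal{K}}$, hence fixes $1\otimes\mathcal{K}(L^2(G))$ pointwise, and so lies in its relative commutant $B\otimes 1$ in the multiplier algebra. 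The cost is that the cocycle identity and the intertwining relation for $\mathbbm{v}_g$ must be checked by hand, but these go through exactly as you indicate: the only inputs are the identity $(\beta_g\otimes\id)(\varphi(1_A\otimes p))=\mathbbm{w}_g^*\varphi(1_A\otimes p)\mathbbm{w}_g$ and the cocycle identity for $\mathbbm{w}$, and strict (indeed norm) continuity of $\mathbbm{v}_g$ is automatic since $s$ lies in the algebra. So the proposal is complete and, at these points, somewhat more self-contained than the paper's proof.
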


\begin{proof}
A variant of part \ref{item: Cocycle1} appears in \cite[Proposition 2.9]{GardellaUnpublished}. However, since this result is unpublished, we will include the details. Suppose first that there exist an isomorphism $\theta\colon A \to B$ and a one-cocycle $\{\mathbbm{u}_g\}_{g\in G}$ for $\beta$ such that $\Ad(\mathbbm{u}_g)\circ\beta_g\circ\theta=\theta\circ\alpha_g$ for any $g\in G$. Then one can define \[\varphi_0 \colon C_c(G,A)\to C_c(G,B), \  \varphi_0(\zeta)(g) = \mathbbm{u}_g\theta(\zeta(g)),\] for $\zeta\in C_c(G,A)$ and $g\in G$. Since $\theta$ is a cocycle isomorphism, it follows that $\varphi_0$ extends to an isomorphism $\varphi\colon A\rtimes_\alpha G\to B\rtimes_\beta G$. Note that $\varphi$ is precisely obtained by extending $\theta$ and the map which sends each canonical unitary $\lambda_g$ to $\mathbbm{u}_g\lambda_g$. To see that $\varphi$ is the required conjugacy, we will check that $\varphi$ intertwines the dual actions. Given $\zeta\in C_c(G,A)$, $g\in G$, and $\chi\in\widehat{G}$, we get that
\begin{align*}
\varphi(\widehat{\alpha}_\chi(\zeta))(g) = \mathbbm{u}_g \theta(\widehat{\alpha}_\chi(\zeta)(g))
= \overline{\chi(g)}\varphi(\zeta)(g)
= \widehat{\beta}_\chi(\varphi(\zeta))(g),
\end{align*} which shows that $\varphi$ is a conjugacy from $(A\rtimes_\alpha G,\widehat{\alpha})$ to $(B\rtimes_\beta G,\widehat{\beta})$. 

Then \ref{item: Cocycle2} follows from \ref{item: Cocycle1} using Takai's duality \cite{Takai}. To prove \ref{item: Cocycle3}, we will follow the strategy in \cite[Corollary 4.2]{BratKish}. Let $\varphi$ be an isomorphism from $A\otimes\mathcal{K}(L^2(G))$ to $B\otimes\mathcal{K}(L^2(G))$ such that 
\begin{equation}\label{eq: Cocycle1}
(\beta_g\otimes\Ad\rho_g)\circ\varphi=\varphi\circ (\alpha_g\otimes\Ad\rho_g), \ g\in G.
\end{equation}
Since $K_0(\varphi)([1_A\otimes p]_0)=[1_B\otimes p]_0$ and $B$ is simple, purely infinite, there exists a unitary $v$ in the multiplier algebra of $B\otimes\mathcal{K}(L^2(G))$ such that\footnote{In fact, $v$ can be taken in the minimal unitisation of $B\otimes\mathcal{K}(L^2(G))$.} \[\Ad(v)(\varphi(1_A\otimes p))=1_B\otimes p.\]Thus, there exists an isomorphism $\sigma\colon A\to B$ such that
\begin{equation}\label{eq: Cocycle2}
\Ad(v)\circ\varphi= \sigma\otimes \id_{\mathcal{K}}.
\end{equation}As in the proof of \cite[Corollary 4.2]{BratKish}, we get that 
\begin{align*}
&\Ad(v(\beta_g\otimes\Ad\rho_g)(v^*))(\beta_g\circ\sigma)\otimes \id_{\mathcal{K}} \\ &= \Ad(v)\circ\varphi\circ\varphi^{-1}\circ \Ad((\beta_g\otimes\Ad\rho_g)(v^*))\circ((\beta_g\circ\sigma)\otimes\id_\mathcal{K})\\
&\ontop{\eqref{eq: Cocycle2}}{=} (\sigma\otimes\id_\mathcal{K})\circ\varphi^{-1}\circ(\beta_g\otimes\Ad\rho_g)\circ\Ad(v^*)\circ (\sigma\otimes\Ad\rho_{g^{-1}}) \\
&\ontop{\eqref{eq: Cocycle1}}{=} (\sigma\otimes\id_\mathcal{K})\circ (\alpha_g\otimes\Ad\rho_g)\circ\varphi^{-1}\circ\Ad(v^*)\circ (\sigma\otimes\Ad\rho_{g^{-1}})\\
&\ontop{\eqref{eq: Cocycle2}}{=} (\sigma\otimes\id_\mathcal{K})\circ(\alpha_g\otimes\Ad\rho_g)\circ(\sigma^{-1}\otimes\id_\mathcal{K})\circ (\sigma\otimes\Ad\rho_{g^{-1}})\\
&=(\sigma\circ\alpha_g)\otimes\id_\mathcal{K}.
\end{align*}
Letting $\mathbbm{u}_g= v(\beta_g\otimes\Ad\rho_g)(v^*)$ for any $g\in G$, we produced a one-cocycle for $\beta$ in the unitary group of $B$ which satisfies\[\sigma\circ\alpha_g=\Ad(\mathbbm{u}_g)\circ\beta_g\circ\sigma.\] Hence $(A,\alpha)$ and $(B,\beta)$ are cocycle conjugate.
\end{proof}

\section{Properties of group actions}

In this section, we will collect a number of regularity properties for group actions which will be used in the sequel. We will primarily focus on integer actions and continuous actions of the real numbers.

\subsection{Integer actions}

\begin{defn}[cf.\ {\cite[Definition 2.1 (ii)]{GSSSA19}}]\label{defn: RokhlinIntegers}
Let $A$ be a separable, unital $\C$-algebra and let $\alpha$ be an
automorphism of $A$. We say that $\alpha$ has the Rokhlin property, if for every $n\in\mathbb{N}$, there exist approximately central sequences\footnote{That is the induced elements $e,f$ are in the central sequence algebra $A_\infty\cap A'$. The Rokhlin property can be defined in the non-unital case too by taking $e,f\in F_\infty(A)$.} of projections $e_k,f_k\in A$ such that 
\[1=\lim\limits_{k\to\infty}\sum\limits_{j=0}^{n-1}\alpha^j(e_k)+\sum\limits_{l=0}^n\alpha^l(f_k).\footnote{See also \cite{RorCuntz93} and \cite[Definition 4.1]{Ror95}.}\]   
\end{defn}

\begin{rmk}\label{rmk: RokhlinProp}
Given $n\in\mathbb{N}$, $\varepsilon>0$, and a finite set $\mathcal{F}\subseteq A$, we will say that two projections $e,f\in A$ are bases for two \emph{$(n,\varepsilon,\mathcal{F})$-approximately Rokhlin towers} if $\|ea-ae\|,\|fa-af\|\leq\varepsilon$ for any $a\in\mathcal{F}$, and $$\left\|\sum\limits_{j=0}^{n-1}\alpha^j(e)+\sum\limits_{l=0}^n\alpha^l(f)-1\right\|\leq\varepsilon.$$
\end{rmk}

We will obtain the Rokhlin property for an integer action by appealing to \emph{equivariant approximate divisibility}. Since we will also need this property for continuous actions of $\mathbb{R}$, we shall define it in the context of second-countable, locally compact groups (see also \cite[Lemma 2.12]{CNS25}).

\begin{defn}\label{defn: EquivApproxDiv}
Let $A$ be a separable $\C$-algebra, $G$ be a second-countable, locally compact group, and $\alpha\colon G\curvearrowright A$ be a continuous action. Then we say that $\alpha$ is equivariantly approximately divisible if there exists a unital $*$-homomorphism $$\theta\colon M_2\oplus M_3\to F_\infty(A)^{\tilde{\alpha}_\infty}.$$   
\end{defn}

In the case when the acting group is the integers, equivariant approximate divisibility can be employed to obtain the Rokhlin property. I would like to thank Gábor Szabó for showing me a proof of the following lemma. Here, $\mathcal{Z}$ stands for the Jiang-Su algebra introduced in \cite{JiangSu99}. A $\C$-algebra $A$ is said to be $\mathcal{Z}$-stable if $A\cong A\otimes\mathcal{Z}$.

\begin{lemma}\label{lemma: RokhlinLemmaGabor}
Let $A$ be a simple, separable, unital, nuclear, $\Z$-stable $\C$-algebra with a unique trace. Let $\alpha\colon\mathbb{Z}\curvearrowright A$ be a strongly outer\footnote{If $\tau$ is the unique trace on $A$, then $\alpha$ is said to be strongly outer if the automorphism induced by $\alpha$ on $\pi_\tau(A)''$ is outer.} and equivariantly approximately divisible action. Then $\alpha$ has the Rokhlin property.   
\end{lemma}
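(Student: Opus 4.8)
The plan is to verify the Rokhlin property in its central-sequence reformulation: by Definition \ref{defn: RokhlinIntegers}, it suffices to produce, for each $n\in\mathbb{N}$, a pair of projections $e,f\in F_\infty(A)$ with
\[\sum_{j=0}^{n-1}\tilde\alpha_\infty^j(e)+\sum_{l=0}^n\tilde\alpha_\infty^l(f)=1,\]
where the terms are mutually orthogonal projections. I would first construct such towers on the tracial (von Neumann) level and then transport them back to $F_\infty(A)$. Write $\tau$ for the unique trace and $\mathcal{M}=\pi_\tau(A)''$. Since $A$ is simple, separable, nuclear and monotracial, $\mathcal{M}$ is a factor and, being injective with separable predual and a faithful trace, it is the hyperfinite II$_1$ factor. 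The action $\alpha$ induces an automorphism $\bar\alpha$ of $\mathcal{M}$, which is outer by the strong outerness hypothesis, so the generated $\mathbb{Z}$-action on $\mathcal{M}$ is free.

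At the von Neumann level, I would invoke the Rokhlin lemma for aperiodic automorphisms of the hyperfinite II$_1$ factor (Connes/Ocneanu): for each $n$ there exist mutually orthogonal projections in the tracial central-sequence algebra $\mathcal{M}_\infty\cap\mathcal{M}'$ forming two towers of heights $n$ and $n+1$ under $\bar\alpha$ and summing to $1$. Because $\gcd(n,n+1)=1$ and traces of projections in $\mathcal{M}_\infty\cap\mathcal{M}'$ fill out $[0,1]$, one can moreover prescribe the traces $a=\tau(\bar e)$ and $b=\tau(\bar f)$ of the tower bases subject only to the height constraint $na+(n+1)b=1$ with $a,b\ge 0$; this flexibility is needed to match the traces available on the C*-side.

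The decisive step is to lift these projections from $\mathcal{M}_\infty\cap\mathcal{M}'$ to $F_\infty(A)$. Here I would use that $A$, being simple, separable, unital, nuclear, $\Z$-stable and monotracial, has property (SI) in the sense of Matui and Sato, which governs the kernel of the canonical map from the C*-central-sequence algebra onto its tracial (von Neumann) counterpart. Concretely: lift the von Neumann tower to positive contractions in $F_\infty(A)$; use equivariant approximate divisibility---that is, the unital $*$-homomorphism $\theta\colon M_2\oplus M_3\to F_\infty(A)^{\tilde\alpha_\infty}$---to manufacture $\tilde\alpha_\infty$-fixed projections of prescribed trace (the ranges of $2$- and $3$-divisibility being dense enough to realise $a$ and $b$ up to arbitrary precision) and to absorb the trace defects; then apply property (SI) to correct the lifted contractions into genuine orthogonal projections satisfying the tower relation exactly. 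The equivariance of $\theta$, whose image lies in the fixed-point algebra, keeps the correction compatible with $\tilde\alpha_\infty$, so that the resulting $e,f$ still generate towers of heights $n$ and $n+1$ under the induced action.

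I expect this last transfer---from exact von Neumann Rokhlin towers to exact C*-algebraic ones---to be the main obstacle, since it is precisely where the interplay of the unique trace, of $\Z$-stability (through property (SI)), and of the equivariant divisibility must be orchestrated simultaneously. Once $e,f\in F_\infty(A)$ with the exact tower relation are in hand, a routine reindexing and perturbation argument produces the required approximately central sequences $e_k,f_k\in A$ of Definition \ref{defn: RokhlinIntegers}, completing the proof.
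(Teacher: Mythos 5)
Your plan follows the classical Matui--Sato/Sato route: build Rokhlin towers for the induced automorphism of the hyperfinite II$_1$ factor $\pi_\tau(A)''$, then transfer them to $F_\infty(A)$ using property (SI) and the divisibility map. This is genuinely different from the paper's argument, which never works in the tracial von Neumann algebra directly: there, strong outerness is fed into the absorption theorem for the strongly self-absorbing Bernoulli shift $\gamma$ on $\Z^{\otimes\mathbb{Z}}$ (\cite[Theorems C and D]{GSSSA19}), yielding a unital \emph{equivariant} $*$-homomorphism $(\Z\otimes(M_2\oplus M_3)^{\otimes\infty},\gamma\otimes\id)\to(A_\infty\cap A',\alpha_\infty)$; the Rokhlin towers are then simply pushed forward from approximate model towers in $\Z\otimes M_{2^k}$ and $\Z\otimes M_{3^k}$ (\cite[Theorem 2.16]{GSSSA19} plus the binomial decomposition of $(M_2\oplus M_3)^{\otimes k}$), and Kirchberg's $\varepsilon$-test produces the exact relations. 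All the von Neumann-algebraic and (SI)-type work is hidden inside the cited absorption theorem.

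As written, however, your proposal has genuine gaps at exactly the step you call decisive. First, plain property (SI) produces a non-equivariant element: if $d$ is the tracially null defect of the lifted towers and $s$ satisfies $s^*s=d$, nothing makes $s$, or the corrected projections, compatible with $\tilde{\alpha}_\infty$. Your justification---that equivariance of $\theta$ ``keeps the correction compatible''---does not address this, because the correction is performed with the (SI) element and the lifted projections, neither of which lies in the image of $\theta$. What is actually needed is an \emph{equivariant} version of property (SI) (the technical core of Matui--Sato's Rohlin theorems and of Szabó's equivariant (SI)); with that input the argument becomes a nontrivial adaptation of Matui--Sato's theorem for UHF-absorbing algebras, not a routine correction. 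Second, the trace-matching mechanism is unsupported: the trace on $F_\infty(A)$ restricts to an uncontrolled trace on the image of $\theta$, so $M_2\oplus M_3$-divisibility does not let you ``manufacture fixed projections of prescribed trace $a,b$''; it is not even clear that the trace values of projections in the image of $\theta$ are dense in $[0,1]$. This is precisely the difficulty the paper's proof sidesteps: in the model $\Z\otimes(M_2\oplus M_3)^{\otimes k}$ the tower relation holds in the model itself, so no trace matching is ever required. Third, a smaller point: outerness of the single automorphism $\bar{\alpha}$ does not imply that the $\mathbb{Z}$-action on $\pi_\tau(A)''$ is free---there are outer automorphisms of the hyperfinite II$_1$ factor with inner, even trivial, powers---and the Connes/Ocneanu Rokhlin lemma needs aperiodicity, i.e.\ all nonzero powers outer. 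That is the intended reading of strong outerness here (and is what the paper verifies, via mixing, in its application), but your inference as stated is a non sequitur.
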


\begin{proof}
Let $\gamma$ be the noncommutative Bernoulli shift on the infinite tensor product $\Z^{\otimes\mathbb{Z}}$. In particular, $\gamma$ is strongly outer (see for example \cite[Example 3.3]{GSSSA19}).

Since $\alpha$ is equivariantly approximately divisible, there exists a unital $*$-homomorphism $\theta\colon (M_2\oplus M_3)^{\otimes\infty}\to (A_\infty\cap A')^{\alpha_\infty}$ \cite[Corollary 1.13]{KirchbergAbel}. Moreover, $\alpha$ is cocycle conjugate to $\alpha\otimes\gamma$ by \cite[Theorem D]{GSSSA19}. In particular, combining \cite[Theorem C]{GSSSA19} and \cite[Lemma 2.12]{SzaSSA2}, we get a unital and equivariant $*$-homomorphism $\zeta\colon (\Z,\gamma)\to (A_\infty\cap A',\alpha_\infty)$, such that the image of $\zeta$ also commutes with the image of the $*$-homomorphism $\theta$ defined above. Thus, by the universal property of the maximal tensor product, there exists a unital and equivariant $*$-homomorphism $$\eta\colon (\Z\otimes (M_2\oplus M_3)^{\otimes\infty},\gamma\otimes \id)\to (A_\infty\cap A',\alpha_\infty).$$

Let $\varepsilon>0$ and $n\in\mathbb{N}$. By \cite[Theorem 2.16]{GSSSA19}, $\gamma\otimes\id_{M_{2^\infty}}$ has the Rokhlin property on $\Z\otimes M_{2^\infty}$, and $\gamma\otimes\id_{M_{3^\infty}}$ has the Rokhlin property on $\Z\otimes M_{3^\infty}$. Therefore, there exists $k_0\in\mathbb{N}$ such that for any $k\geq k_0$, there exist $(n,\varepsilon,\{1\})$-approximate Rokhlin towers in $\Z\otimes M_{2^k}$ and in $\Z\otimes M_{3^k}$ (see Remark \ref{rmk: RokhlinProp}). Furthermore, for $k\geq 2k_0$, we have that $$(M_2\oplus M_3)^{\otimes k}\cong \bigoplus\limits_{j=0}^k (M_{2^j}\otimes M_{3^{k-j}})^{\oplus\binom kj},$$ so there exist $(n,\varepsilon,\{1\})$-approximate Rokhlin towers in $\Z\otimes\Z\otimes(M_2\oplus M_3)^{\otimes k}$.

Since $\gamma$ is strongly self-absorbing \cite[Theorem C]{GSSSA19}, considering the image of the $(n,\varepsilon,\{1\})$-approximate Rokhlin towers through $\eta$, there exist projections $e_0,f_0\in A_\infty\cap A'$ such that $$\left\|\sum\limits_{j=0}^{n-1}\alpha_\infty^j(e_0)+\sum\limits_{l=0}^n\alpha_\infty^l(f_0)-1\right\|\leq\varepsilon.$$ By Kirchberg's $\varepsilon$-test \cite[Lemma 1.1]{CETW21} (see also \cite[Lemma A.1]{KirchbergAbel}), we can find projections $e,f\in A_\infty\cap A'$ such that $$\sum\limits_{j=0}^{n-1}\alpha_\infty^j(e)+\sum\limits_{l=0}^n\alpha_\infty^l(f)=1,$$which shows that $\alpha$ has the Rokhlin property.
\end{proof}

\subsection{One-parameter automorphism groups}

A \emph{flow} $\alpha$ on a $\C$-algebra $A$ is a one-parameter group of automorphisms $(\alpha_t)_{t\in\mathbb{R}}$ such that the map $t\mapsto\alpha_t(a)$ is norm-continuous for any $a\in A$. We will often write $\alpha\colon\mathbb{R}\curvearrowright A$ to express that $\alpha$ is a flow on $A$.

One of the most fundamental properties in the study of flows is the \emph{Rokhlin property} defined by Kishimoto in \cite{KishRokhlinFlows}. However, Kishimoto only defined this property for unital $\C$-algebras, so we will record the general definition from \cite{SzaRokhlinFlows}.

\begin{defn}{\cite[Definition 2.8]{SzaRokhlinFlows}}\label{defn: Rokhlin}
Let $A$ be a separable $\C$-algebra and $\alpha\colon\mathbb{R}\curvearrowright A$  be a flow. We say that
$\alpha$ has the Rokhlin property, if for every $p > 0$, there exists a unitary $u \in F_\infty(A)$ such that $\tilde{\alpha}_{\infty,t}(u)= e^{ipt}u$ for all $t \in \mathbb{R}$.
\end{defn}

This property of flows coincides with the \emph{abelian Rokhlin property} for $\mathbb{R}$-actions defined in \cite{CNS25}. In fact, we will crucially use a duality result between the Rokhlin property and a property defined in \cite{CNS25} as \emph{pointwise strong approximate innerness}. For convenience of the reader, we shall record the definition.

\begin{defn}[cf.\ {\cite[Definition 3.1]{CNS25}}]\label{defn: ApproxInner}
Let $A$ be a separable $\C$-algebra and $\alpha\colon \mathbb{R}\curvearrowright A$ be a flow. Then we say that $\alpha$ is pointwise strongly approximately inner if for any $t\in \mathbb{R}$, there exists a sequence of contractions $v_n$ in $A$ such that 
\begin{itemize}
    \item $v_nv_n^*$ and $v_n^*v_n$ converge strictly to $1_{\mathcal{M}(A)}$,
    \item $\alpha_t(a)=\lim\limits_{n\to\infty}v_nav_n^*$ for any $a\in A$, and
    \item $\lim\limits_{n\to\infty}\max\limits_{t\in K}\|\alpha_t(v_n)-v_n\|=0$ for any compact subset $K\subseteq \mathbb{R}$.
\end{itemize}
\end{defn}

Another property which will be of interest in this paper is equivariant $\Z$-stability of flows.

\begin{defn}\label{defn: EquivZStable}
Let $A$ be a separable $\C$-algebra and $\alpha\colon \mathbb{R}\curvearrowright A$ be a flow. Then we say that $\alpha$ is equivariantly $\mathcal{Z}$-stable if $\alpha$ is cocycle conjugate to $\alpha\otimes\id_{\mathcal{Z}}$.   
\end{defn}

\begin{rmk}\label{rmk: EquivZStable}
Similarly to the McDuff-type characterisation of $\mathcal{Z}$-stability in the non-equivariant setting \cite[Theorem 2.2]{TWSSA07}, it is known that a flow $\alpha$ on a separable $\C$-algebra $A$ is equivariantly $\mathcal{Z}$-stable if and only if there exists a unital embedding $\mathcal{Z}\hookrightarrow F_\infty(A)^{\tilde{\alpha}_\infty}$ \cite[Corollary 3.8]{SzaSSA}.
\end{rmk}

We will access equivariant $\Z$-stability through \emph{equivariant approximate divisibility}, as per Definition \ref{defn: EquivApproxDiv}.

\begin{lemma}\label{lemma: ApproxDiv}
Let $A$ be a separable $\C$-algebra and $\alpha\colon\mathbb{R}\curvearrowright A$. If $\alpha$ is equivariantly approximately divisible, then $\alpha$ is equivariantly $\Z$-stable.    
\end{lemma}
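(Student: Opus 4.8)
The plan is to reduce equivariant $\Z$-stability to the McDuff-type characterisation recorded in Remark \ref{rmk: EquivZStable}. By that remark, it suffices to produce a unital embedding $\Z\hookrightarrow F_\infty(A)^{\tilde\alpha_\infty}$. The hypothesis of equivariant approximate divisibility hands us a unital $*$-homomorphism $\theta\colon M_2\oplus M_3\to F_\infty(A)^{\tilde\alpha_\infty}$, so the entire content of the lemma is to manufacture a copy of the Jiang--Su algebra inside $F_\infty(A)^{\tilde\alpha_\infty}$ starting from such a $\theta$.

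The first step I would carry out is to pass from a single $M_2\oplus M_3$ to its infinite tensor power: since $F_\infty(A)^{\tilde\alpha_\infty}$ is (by Kirchberg's results on central sequence algebras, cf.\ \cite[Corollary 1.13]{KirchbergAbel}, as already invoked in the proof of Lemma \ref{lemma: RokhlinLemmaGabor}) sufficiently $\sigma$-ideal-like to absorb countable sequential reindexing, one can upgrade $\theta$ to a unital $*$-homomorphism $(M_2\oplus M_3)^{\otimes\infty}\to F_\infty(A)^{\tilde\alpha_\infty}$. This is the same mechanism used verbatim in the proof of Lemma \ref{lemma: RokhlinLemmaGabor}, where the finite-stage map $M_2\oplus M_3\to F_\infty(A)^{\tilde\alpha_\infty}$ was boosted to an infinite tensor power map into $(A_\infty\cap A')^{\alpha_\infty}$.

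The second step is the algebraic heart: the infinite tensor product $(M_2\oplus M_3)^{\otimes\infty}$ contains a unital copy of the universal UHF algebra, hence of both $M_{2^\infty}$ and $M_{3^\infty}$, with commuting (indeed supernatural-type) dimension behaviour. This is precisely the building-block structure used to construct $\Z$: as in the constructions relating approximate divisibility to $\Z$-stability, one uses that $(M_2\oplus M_3)^{\otimes\infty}$ admits a unital embedding of $\Z$ itself, or equivalently that a unital embedding of $\Z$ factors through $(M_2\oplus M_3)^{\otimes\infty}$. Composing $\theta^{\otimes\infty}$ with such an embedding $\Z\hookrightarrow (M_2\oplus M_3)^{\otimes\infty}$ produces the desired unital $\Z\hookrightarrow F_\infty(A)^{\tilde\alpha_\infty}$, and Remark \ref{rmk: EquivZStable} then yields equivariant $\Z$-stability.

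The step I expect to be the main obstacle — or at least the only step requiring care rather than citation — is the reindexing/absorption that produces the infinite tensor power map landing in the \emph{equivariant} central sequence algebra $F_\infty(A)^{\tilde\alpha_\infty}$ while preserving continuity of the induced $\R$-action. One must verify that the sequential construction stays inside the continuous part $F_{\infty,\alpha}(A)$ and inside the fixed-point algebra of $\tilde\alpha_\infty$, which is exactly the subtlety that distinguishes the flow setting from the discrete-group setting; everything downstream is a purely algebraic embedding of $\Z$ into a UHF-containing tensor product together with a direct appeal to \cite[Corollary 3.8]{SzaSSA}.
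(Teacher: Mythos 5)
Your overall route coincides with the paper's: use \cite[Corollary 1.13]{KirchbergAbel} to upgrade $\theta$ to a unital $*$-homomorphism $(M_2\oplus M_3)^{\otimes\infty}\to F_\infty(A)^{\tilde{\alpha}_\infty}$, produce a unital embedding $\Z\hookrightarrow (M_2\oplus M_3)^{\otimes\infty}$, compose, and conclude via Remark \ref{rmk: EquivZStable} (i.e.\ \cite[Corollary 3.8]{SzaSSA}). The paper does exactly this, citing \cite[Corollary 7, Corollary 12]{EllRorAbel} for the middle step. Your final worry about reindexing and continuity of the $\R$-action is also not a real obstacle: Kirchberg's corollary is applied directly to the unital $\C$-algebra $F_\infty(A)^{\tilde{\alpha}_\infty}$, exactly as in the proof of Lemma \ref{lemma: RokhlinLemmaGabor}, so nothing extra needs to be verified there.

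The genuine problem is your justification of the ``algebraic heart''. The claim that $(M_2\oplus M_3)^{\otimes\infty}$ contains a unital copy of the universal UHF algebra (hence of $M_{2^\infty}$ and $M_{3^\infty}$) is false; in fact $(M_2\oplus M_3)^{\otimes\infty}$ does not even contain a unital copy of $M_2$. Indeed, let $\tau$ be the product trace which in every tensor factor is the tracial state supported on the $M_3$ summand; since the algebra is AF, every projection is equivalent to one in a finite stage, so $\tau$ takes values in $\mathbb{Z}[1/3]$ on projections. A unital copy of $M_2$ would produce two equivalent orthogonal projections summing to $1$, forcing a projection of trace $1/2\notin\mathbb{Z}[1/3]$, a contradiction (symmetrically, the all-$M_2$ product trace rules out a unital copy of $M_3$, and hence of $M_{3^\infty}$). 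So the unital embedding $\Z\hookrightarrow(M_2\oplus M_3)^{\otimes\infty}$ cannot be obtained by factoring through UHF subalgebras; this is precisely why the Jiang--Su algebra is built from dimension drop algebras, whose UHF fibers are not unital subalgebras. The fact you need is true, but it is a genuinely different statement, and it is exactly what \cite[Corollary 7, Corollary 12]{EllRorAbel} provides (applicable since $(M_2\oplus M_3)^{\otimes\infty}$ is a unital AF algebra with no nonzero finite-dimensional representations). Since you do state this fact as an alternative formulation, your proof is repaired simply by replacing the UHF argument with that citation; as written, however, that step fails.
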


\begin{proof}
By definition, there exists a unital $*$-homomorphism $$\theta\colon M_2\oplus M_3\to F_\infty(A)^{\tilde{\alpha}_\infty}.$$ Then, \cite[Corollary 1.13]{KirchbergAbel} shows that there exists a unital $*$-homomorphism $\zeta\colon (M_2\oplus M_3)^{\otimes\infty}\to F_\infty(A)^{\tilde{\alpha}_\infty}.$ By \cite[Corollary 7, Corollary 12]{EllRorAbel}, there exists a unital embedding $j\colon\Z\to (M_2\oplus M_3)^{\otimes\infty}$. Hence $\zeta\circ j\colon\Z\to F_\infty(A)^{\tilde{\alpha}_\infty}$ is a unital $*$-homomorphism, which shows that $\alpha$ is equivariantly $\Z$-stable (Remark \ref{rmk: EquivZStable}).
\end{proof}

\subsection{Quasi-free flows}
For $2\leq n < \infty$, let the Cuntz algebra $\cO_n$ be generated by a family of isometries $s_1,s_2,\ldots, s_n$. Let $\mathcal{H}$ is a Hilbert space of dimension $n\geq 2$.

Following the notation in \cite{Eva80}, define the full Fock space of $\mathcal{H}$ by $F(\mathcal{H})=\bigoplus\limits_{r=0}^\infty\otimes^r\mathcal{H}$, where $\otimes^0\mathcal{H}$ is a one-dimensional Hilbert space spanned by a unit vector $\Omega$. One can then define a linear map $\cO_F\colon \mathcal{H}\to \mathcal{B}(F(\mathcal{H}))$ by $$\cO_F(f)\left(f_1\otimes\ldots\otimes f_r\right)=f\otimes f_1\otimes\ldots\otimes f_r$$ and $\cO_F(f)\left(\Omega\right)= f$ for any $r\in\mathbb{N}$ and any $f,f_1,\ldots,f_r\in\mathcal{H}$. Let $\cO(\mathcal{H})=\cO_F(\mathcal{H})/\mathcal{K}(F(\mathcal{H}))$ be the quotient by the compact operators and define a linear map $\cO\colon\mathcal{H}\to\cO(\mathcal{H})$ by $\cO=\pi\circ\cO_F$, where $\pi\colon\cO_F(\mathcal{H})\to\cO(\mathcal{H})$ is the canonical quotient map. Since $\mathcal{H}$ has dimension $n$, $\cO(\mathcal{H})$ is isomorphic to the Cuntz algebra $\cO_n$ \cite{CuntzAlg}. Furthermore, if $U$ is a
unitary on the Hilbert space $\mathcal{H}$, then there is a unique $*$-isomorphism $\cO(U)$ of $\cO(\mathcal{H})$ such that $\cO(U)\cO(f) = \cO(Uf)$, for any $f\in\mathcal{H}$. In fact, the map $U\to\cO(U)$ is continuous for the corresponding strong topologies because $\|\cO(f)\| = \| f \|$, for any $f\in\mathcal{H}$. Therefore, if $\{U_t\colon t\in\mathbb{R}\}$ is a strongly continuous one-parameter unitary group on $\mathcal{H}$, then $\{\cO(U_t)\colon t\in\mathbb{R}\}$ is a flow on $\cO(\mathcal{H})$, and hence on $\cO_n$. These flows are called \emph{quasi-free flows}.

\begin{rmk}\label{rmk: QuasiFree}
Since $\mathcal{H}$ has dimension $n$, Stone's theorem ensures the existence of a self-adjoint operator $h\in\mathcal{B}(\mathcal{H})$ such that $U_t=e^{ith}$ for any $t\in\mathbb{R}$. Since we can diagonalise $h$ and its eigenvalues are real, up to conjugacy, $U_t$ is a diagonal matrix with diagonal entries of the form $e^{itx}$, where $x\in\mathbb{R}$. Thus, up to conjugacy, any quasi-free flow on the Cuntz algebra $\cO_n$ is determined by a tuple $L=(L_1,L_2,\ldots, L_n)\in\mathbb{R}^n$.
\end{rmk}

Thus, for any tuple $L\in\mathbb{R}^n$, we consider the flow $\alpha^L\colon\R\curvearrowright\cO_n$ given by $\alpha^L_t(s_k)=e^{itL_k}s_k$ for any $1\leq k\leq n$. Throughout the rest of this article, whenever we call a flow quasi-free, we assume it is of the form $\alpha^L$ defined above.

\section{Generic classification}\label{sect: QFreeGeneric}

We shall now restrict our attention to quasi-free flows on the Cuntz algebra $\cO_2$ together with a subclass of quasi-free flows on Cuntz algebras $\mathcal{O}_n$ for $n\geq 3$. In the latter case, for $p,q\in\mathbb{R}$, we consider the flow $\alpha^{(p,q)}$ on $\mathcal{O}_n$ given by $\alpha^{(p,q)}_t(s_k)=e^{it(p+(k-1)q)}s_k$ for any $t\in\R$ and $k=1,\ldots,n$. Let us first recall some properties regarding the fixed-point algebras of the classes of flows described above. If $L=(L_1,L_2)\in\mathbb{R}^2$ and $L_1$ and $L_2$ are rationally independent, then $\cO_2^{\alpha^L}$ is the \emph{GICAR algebra} and it is generated by elements of the form $s_\mu s_\nu^*$, where $\mu$ and $\nu$ are words in $1$'s and $2$'s such that $|\mu|_i=|\nu|_i$ for $i=1,2$ (see for example \cite[Section 3]{Kish02}). Here, $|\mu|_1$ stands for the number of $1$'s that appear in the word $\mu$, while $|\mu|_2$ stands for the number of $2$'s. Recall that the GICAR algebra is the fixed-point algebra of the CAR algebra $M_{2^\infty}$ under the (periodic) $\mathbb{R}$-action given by \[\bigotimes_{m=1}^\infty\Ad(e_{11}^{(m)}+e^{2\pi i t}e_{22}^{(m)}).\] For Cuntz algebras $\cO_n$ with $n\geq 3$, if $p$ and $q$ are rationally independent, then as in \cite[Proposition 3.3]{Kish02}, we have\footnote{The assumption in \cite[Proposition 3.3]{Kish02} that $p$ and $p+(n-1)q$ have different signs does not affect the fixed-point algebra and it is only used to deduce an appropriate version of \cite[Lemma 3.1]{Kish02}.} \[\cO_n^{\alpha^{(p,q)}}\cong (M_{n^\infty})^\gamma,\]where $\gamma$ is the (periodic) $\mathbb{R}$-action given by \[\bigotimes_{m=1}^\infty\Ad(e_{11}^{(m)}+e^{2\pi i t}e_{22}^{(m)}+\ldots+e^{2\pi i(n-1)t}e_{nn}^{(m)}).\]We will often refer to the $\C$-algebra $(M_{n^\infty})^\gamma$ constructed above as the \emph{generalised GICAR algebra}.

If any of the pairs $(L_1,L_2)$ and $(p,q)$ are rationally dependent, the situation is drastically different, but we can still identify the GICAR algebra and the generalised GICAR algebra as canonical subalgebras of the corresponding fixed-point algebras. 

\begin{lemma}\label{lemma: GicarSubalg}
Let $L=(L_1,L_2)\in\mathbb{R}^2$ and $(p,q)\in\mathbb{R}^2$ such that $L_1L_2\neq 0$ and $pq\neq 0$. Consider the flows $\alpha^L\colon\mathbb{R}\curvearrowright\cO_2$ and $\alpha^{(p,q)}\colon\mathbb{R}\curvearrowright\cO_n$ for some fixed $n\geq 3$.\ Then the GICAR algebra is canonically a $\C$-subalgebra of $\cO_2^{\alpha^L}$ and the generalised GICAR algebra is canonically a $\C$-subalgebra of $\cO_n^{\alpha^{(p,q)}}$. 
\end{lemma}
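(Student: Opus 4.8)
The plan is to realize both the GICAR algebra and the generalised GICAR algebra as the fixed-point algebras of auxiliary torus (gauge) actions through which the given flows factor. The key observation is that, unlike the fixed-point algebra of the flow itself (which depends delicately on the arithmetic of the parameters), the fixed-point algebra of the full torus action is always the same canonical subalgebra of $\cO_n$; and if $\alpha$ is a one-parameter subgroup of such a torus action $\beta$, then trivially $\cO_n^\beta\subseteq\cO_n^\alpha$, since every $\beta$-fixed element is $\alpha$-fixed. So the entire lemma reduces to identifying the correct torus action in each case and computing its fixed-point algebra.

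For the $\cO_2$ case, first I would introduce the gauge action $\beta\colon\mathbb{T}^2\curvearrowright\cO_2$ determined by $\beta_{(z_1,z_2)}(s_k)=z_k s_k$. Since $\beta_{(z_1,z_2)}(s_\mu s_\nu^*)=z_1^{|\mu|_1-|\nu|_1}z_2^{|\mu|_2-|\nu|_2}s_\mu s_\nu^*$, a monomial is $\beta$-fixed exactly when $|\mu|_i=|\nu|_i$ for $i=1,2$, so $\cO_2^\beta$ is precisely the closed span of these balanced monomials, i.e. the GICAR algebra. One recovers the description from the excerpt by noting that the diagonal circle $z\mapsto\beta_{(z,z)}$ has fixed-point algebra the CAR core $M_{2^\infty}$, on which the residual circle $\beta_{(1,w)}$ restricts, under the identification $s_\mu s_\nu^*\leftrightarrow e_{\mu_1\nu_1}\otimes\cdots\otimes e_{\mu_m\nu_m}$, to the periodic action $\bigotimes\Ad(e_{11}+e^{2\pi it}e_{22})$. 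I would then simply observe that $\alpha_t^L=\beta_{(e^{itL_1},e^{itL_2})}$, so $\alpha^L$ factors through $\beta$ and hence $\cO_2^\beta\subseteq\cO_2^{\alpha^L}$, exhibiting the GICAR algebra canonically inside $\cO_2^{\alpha^L}$.

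The $\cO_n$ case runs in parallel, but with the correct two-dimensional subtorus rather than the full gauge torus $\mathbb{T}^n$. Here I would take $\beta\colon\mathbb{T}^2\curvearrowright\cO_n$ given by $\beta_{(z,w)}(s_k)=zw^{k-1}s_k$, so that $\alpha_t^{(p,q)}=\beta_{(e^{itp},e^{itq})}$ factors through $\beta$. A monomial $s_\mu s_\nu^*$ is $\beta$-fixed exactly when $|\mu|=|\nu|$ and $\sum_j\mu_j=\sum_k\nu_k$. The circle $z\mapsto\beta_{(z,1)}$ again cuts out the UHF core $\cO_n^{\beta_{(\cdot,1)}}\cong M_{n^\infty}$, and under the standard identification of this core the residual circle $w\mapsto\beta_{(1,w)}$ restricts precisely to the action $\gamma$ of the excerpt. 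Hence $\cO_n^\beta=(M_{n^\infty})^\gamma$ is the generalised GICAR algebra, and $\cO_n^\beta\subseteq\cO_n^{\alpha^{(p,q)}}$ as before.

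The only genuinely delicate point is this last index bookkeeping: one must check that the residual gauge circle restricts to exactly the defining action $\gamma$, and in particular that $(M_{n^\infty})^\gamma$ is cut out by the two-dimensional subtorus $\beta$ and not by the full gauge torus $\mathbb{T}^n$ (for $n\geq3$ the latter yields the strictly smaller algebra with $|\mu|_k=|\nu|_k$ for every $k$). The hypotheses $L_1L_2\neq0$ and $pq\neq0$ serve as standing non-degeneracy assumptions placing us in the regime of interest; the containment itself holds for arbitrary parameters, since every balanced monomial is fixed by the flow regardless of the values of $L$ or $(p,q)$.
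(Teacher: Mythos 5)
Your argument is correct, and it reaches the containment by a different (more structural) route than the paper. The paper works directly with the flow: it computes that $s_\mu s_\nu^*$ is fixed by $\alpha^L$ exactly when $L_1|\mu|_1+L_2|\mu|_2=L_1|\nu|_1+L_2|\nu|_2$, notes that balanced monomials (those with $|\mu|_i=|\nu|_i$) satisfy this identity for every $L$, and concludes that their closed span---the canonical copy of the GICAR algebra---lies in $\cO_2^{\alpha^L}$; the $\cO_n$ case is handled by the analogous two conditions on $\sum_k|\mu|_k$ and $\sum_k(k-1)|\mu|_k$. You instead factor the flow through the gauge torus, $\alpha^L_t=\beta_{(e^{itL_1},e^{itL_2})}$, so the containment $\cO_2^\beta\subseteq\cO_2^{\alpha^L}$ is automatic and all the content sits in the parameter-free identification of $\cO_2^\beta$ with the GICAR algebra. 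The eigenvalue computation on monomials underlying that identification is the same one the paper performs, but your packaging buys two things: it makes the independence of the containment from $L$ (resp.\ $(p,q)$) conceptually transparent, and, via the diagonal circle (cutting out the UHF core) and the residual circle, it rederives the identification of the closed span of balanced monomials with the abstractly defined GICAR algebra (the fixed points of the periodic action on $M_{2^\infty}$), which the paper simply quotes from Kishimoto's work. Your caveat for $n\geq 3$---use the two-parameter subtorus $\beta_{(z,w)}(s_k)=zw^{k-1}s_k$ rather than the full gauge torus $\mathbb{T}^n$, whose fixed-point algebra is strictly smaller---is exactly the right bookkeeping. Do make explicit the one step you elide: $\cO_n^\beta$ equals the closed span of the $\beta$-fixed monomials because the torus is compact, so averaging over it gives a conditional expectation onto $\cO_n^\beta$ that kills every non-fixed monomial and hence maps the dense span of all monomials onto the span of the fixed ones. (For the containment claimed in the lemma even this equality is not needed: the inclusion of the span of balanced monomials into the fixed-point algebra of $\beta$, hence of $\alpha^L$, is immediate.)
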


\begin{proof}
Note that the linear span of elements of the form $s_\mu s_\nu^*$ is dense in $\cO_2$, where $\mu$ and $\nu$ are words in $1$ and $2$'s. Then, $s_\mu s_\nu^*$ is in $\cO_2^{\alpha^L}$ if and only if $$L_1|\mu|_1+L_2|\mu|_2=L_1|\nu|_1+L_2|\nu|_2.$$Thus, $\cO_2^{\alpha^L}$ is generated by elements of the form $s_\mu s_\nu^*$ such that $L_1|\mu|_1+L_2|\mu|_2=L_1|\nu|_1+L_2|\nu|_2.$ 

If $L_1/L_2\in\mathbb{R}\setminus\mathbb{Q}$, this implies that $|\mu|_i=|\nu|_i$ for $i=1,2$. Therefore, $\cO_2^{\alpha^L}$ is isomorphic to the GICAR algebra in this case. If $L_1/L_2\in\mathbb{Q}$, the closed linear span of elements of the form $s_\mu s_\nu^*$ such that $|\mu|_i=|\nu|_i$ for $i=1,2$ is a $\C$-subalgebra of $\cO_2^{\alpha^L}$. Hence, the GICAR algebra is canonically a $\C$-subalgebra of $\cO_2^{\alpha^L}$ for any $L\in\mathbb{R}$.

The proof for the flow $\alpha^{(p,q)}$ is similar. An element $s_\mu s_\nu^*\in \cO_n^{\alpha^{(p,q)}}$ if and only if \[p\left(\sum_{k=1}^n |\mu|_k\right)+q\left(\sum_{k=1}^n (k-1)|\mu|_k\right)=p\left(\sum_{k=1}^n |\nu|_k\right)+q\left(\sum_{k=1}^n (k-1)|\nu|_k\right).\]If $p$ and $q$ are rationally independent, then $\sum_{k=1}^n |\mu|_k=\sum_{k=1}^n |\nu|_k$ and $\sum_{k=1}^n (k-1)|\mu|_k=\sum_{k=1}^n (k-1)|\nu|_k$, in which case one gets the generalised GICAR algebra. Furthermore, the closed linear span of elements $s_\mu s_\nu^*$ such that $\sum_{k=1}^n |\mu|_k=\sum_{k=1}^n |\nu|_k$ and $\sum_{k=1}^n (k-1)|\mu|_k=\sum_{k=1}^n (k-1)|\nu|_k$ is contained in $\cO_n^{\alpha^{(p,q)}}$ for any nonzero $p,q\in\mathbb{R}$.
\end{proof}

\begin{rmk}
Although Kishimoto extended the results from \cite{Kish02} to more general quasi-free flows on $\cO_n$ \cite{KishOneCocycle}, the generalised GICAR algebra might not be a $\C$-subalgebra of $\cO_n^{\alpha^L}$ for any tuple $L\in\mathbb{R}^n$ where each two entries are rationally dependent.
\end{rmk}

We can now use the lemma above and the proofs of \cite[Proposition 3.2, Proposition 3.3]{Kish02} to show the following. 

\begin{lemma}\label{lemma: CentralSeqO2}
The following statements hold.
\begin{enumerate}[label=\textit{(\roman*)}]
\item Let $L=(L_1,L_2)\in\mathbb{R}^2$ such that $L_1L_2\neq 0$ and let $\alpha^L\colon\mathbb{R}\curvearrowright\cO_2$. Then there exists a unital and equivariant $*$-homomorphism \[\theta\colon(\cO_2,\alpha^L)\to ((\cO_2)_{\infty,\alpha^L}\cap(\cO_2)',\alpha_\infty^L).\]\label{item: Central1}
\item Let $p,q\in\mathbb{R}$ be nonzero, $n\geq 3$, and $\alpha^{(p,q)}\colon\mathbb{R}\curvearrowright\cO_n$. Then there exists a unital and equivariant $*$-homomorphism \[\theta\colon(\cO_n,\alpha^{(p,q)})\to ((\cO_n)_{\infty,\alpha^{(p,q)}}\cap(\cO_n)',\alpha_\infty^{(p,q)}).\]\label{item: Central2}
\end{enumerate}
\end{lemma}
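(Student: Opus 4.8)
The plan is to produce $\theta$ by exhibiting, for a prescribed finite set and tolerance, a family of approximately central Cuntz isometries that are honest $\alpha$-eigenvectors, and then to reindex. Write $\lambda_k=L_k$ in case \ref{item: Central1} and $\lambda_k=p+(k-1)q$ in case \ref{item: Central2}, and let $\varphi(x)=\sum_k s_kxs_k^*$ denote the canonical endomorphism of $\cO_n$. I would first reduce the statement to the following approximate assertion: for every finite set $\mathcal{F}\subseteq\cO_n$ and every $\varepsilon>0$ there exist isometries $t_1,\dots,t_n\in\cO_n$ satisfying the Cuntz relations $t_j^*t_k=\delta_{jk}$ and $\sum_k t_kt_k^*=1$, satisfying $\alpha_t(t_k)=e^{it\lambda_k}t_k$ for all $t\in\R$, and satisfying $\|[t_k,a]\|<\varepsilon$ for all $a\in\mathcal{F}$ (since the $s_j$ generate $\cO_n$, it suffices to control $\mathcal{F}=\{s_1,\dots,s_n\}$ and let $\varepsilon$ shrink). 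Granting this, a diagonal argument along a dense exhausting sequence $\mathcal{F}_m\uparrow\cO_n$ with $\varepsilon_m\downarrow 0$ yields isometries $T_k=[(t_k^{(m)})_m]\in(\cO_n)_\infty\cap(\cO_n)'$; if the relations hold only approximately, Kirchberg's $\varepsilon$-test \cite[Lemma 1.1]{CETW21} absorbs the defect and makes them exact, so by the universal property of $\cO_n$ these determine a unital $*$-homomorphism $\theta$ with $\theta(s_k)=T_k$.

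The construction of the eigenvector isometries is where Lemma \ref{lemma: GicarSubalg} and the arguments of \cite[Proposition 3.2, Proposition 3.3]{Kish02} enter. The crucial observation is that every word $s_\mu s_\nu^*$ with $|\mu|_i=|\nu|_i$ for all $i$ is $\alpha$-invariant, so that any element of the form $g_1 s_k g_2$ with $g_1,g_2$ in the (generalised) GICAR algebra is an \emph{exact} $\alpha$-eigenvector of weight $\lambda_k$. Kishimoto's construction produces the required isometries within this class, using only (a) the presence of the (generalised) GICAR algebra as a subalgebra of the fixed-point algebra, and (b) the Rokhlin-type behaviour of the restriction of $\varphi$ to this subalgebra. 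By Lemma \ref{lemma: GicarSubalg} both ingredients are available under the sole hypotheses $L_1L_2\neq0$, respectively $pq\neq0$: in Kishimoto's original argument rational independence is used only to identify the \emph{full} fixed-point algebra with the GICAR algebra, which plays no role in the central-sequence construction. Re-running his proof with GICAR regarded merely as a subalgebra therefore yields the isometries $t_1,\dots,t_n$ above, and this is the one new conceptual point relative to \cite{Kish02}.

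Finally I would record the bookkeeping that upgrades $\theta$ to the asserted equivariant map into the continuous part. Because each $t_k^{(m)}$ is an exact eigenvector, $\alpha_{\infty,t}(\theta(s_k))=e^{it\lambda_k}\theta(s_k)$, which is norm-continuous in $t$; hence $\theta(s_k)\in(\cO_n)_{\infty,\alpha}$, and since the continuous part is a $\C$-subalgebra the whole image of $\theta$ lies in $(\cO_n)_{\infty,\alpha}\cap(\cO_n)'$. The same identity gives $\theta(\alpha_t(s_k))=e^{it\lambda_k}\theta(s_k)=\alpha_{\infty,t}(\theta(s_k))$, so $\theta$ is equivariant. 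The main obstacle is the approximate centrality in the reduction step: the naive candidate $\varphi^m(s_k)=\sum_{|\mu|=m}s_\mu s_k s_\mu^*$ is an eigenvector of the correct weight but is \emph{not} approximately central, since inserting $s_k$ at a fixed depth fails to commute with left multiplication by a generator (one computes $\|[\varphi^m(s_k),s_j]\|=\|s_k-\varphi(s_k)\|$, independent of $m$). Overcoming exactly this is the technical heart of \cite{Kish02}, where the Rokhlin property of the shift $\varphi$ on the core is used to synchronise the insertion depth; this is the step I expect to be hardest, and the one we import rather than reprove.
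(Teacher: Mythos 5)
Your proposal is correct and is essentially the paper's own argument: both import Kishimoto's construction from \cite[Propositions 3.2, 3.3]{Kish02} of unitaries $v_m$ in the (generalised) GICAR algebra with $\|u-v_m\lambda(v_m^*)\|\to 0$ (where $u$ is the unitary with $u\lambda(s_k)=s_k$), both rely on Lemma \ref{lemma: GicarSubalg} to see that these unitaries are fixed by the flow for all nonzero parameters --- so that the elements $v_m^*s_k$ are exact Cuntz isometries and exact eigenvectors, which is precisely your class $g_1s_kg_2$ --- and both derive equivariance and membership in the continuous part from the eigenvector identity, exactly as in your bookkeeping step. The only small correction is that your reduction should require approximate commutation with the self-adjoint set $\{s_j,s_j^*\}_{j=1}^n$ rather than with $\{s_j\}_{j=1}^n$ alone (smallness of $[t_k,s_j]$ only controls $[t_k^*,s_j^*]$, not $[t_k,s_j^*]$), which is harmless here because the imported elements satisfy $\|\lambda(v_m^*s_k)-v_m^*s_k\|\to 0$ and approximate $\lambda$-invariance yields both commutators at once.
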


\begin{proof}
We will first prove \ref{item: Central1}. Let $\lambda$ be the canonical endomorphism of $\cO_2$ given by $\lambda(x)=s_1xs_1^*+s_2xs_2^*$, and let 
$u=\sum\limits_{i,j=1}^2s_is_js_i^*s_j^*$ be a unitary in $\cO_2$. In particular, a direct calculation shows that $u\lambda(s_k)=s_k$ for each $k=1,2$. Then, the proof of \cite[Proposition 3.2]{Kish02} produces a sequence $(v_n)_{n\in\mathbb{N}}$ of unitaries in the GICAR algebra such that $$\lim\limits_{n\to\infty}\|u-v_n\lambda(v_n^*)\|=0.$$ Therefore, we have that for each $k=1,2$,
\begin{equation}\label{eq: CentralSeq}
\|\lambda(v_n^*s_k)-v_n^*s_k\|=\|v_n\lambda(v_n^*s_k)-s_k\|\to \|u\lambda(s_k)-s_k\|=0.
\end{equation}
If we let $r_k=[(v_n^*s_k)_{n\in\mathbb{N}}]\in (\cO_2)_\infty$ for $k=1,2$, then $s_1r_ks_1^*+s_2r_ks_2^*=r_k$. Multiplying on the right with $s_1$ shows that $s_1r_k=r_ks_1$, while multiplying on the right with $s_2$ shows that $s_2r_k=r_ks_2$, so $r_k\in (\cO_2)_\infty\cap (\cO_2)'$ for each $k=1,2$. Moreover, a straightforward check shows that $r_1$ and $r_2$ satisfy the $\cO_2$-relations, so we obtain a unital $*$-homomorphism $\theta\colon\cO_2\to (\cO_2)_\infty\cap (\cO_2)'$. By Lemma \ref{lemma: GicarSubalg}, the sequence $(v_n)_{n\in\mathbb{N}}$ consists of unitaries fixed by the flow $\alpha^L$, so $\theta$ is also equivariant. 

The proof of \ref{item: Central2} follows in the same way as for \ref{item: Central1}, using \cite[Proposition 3.3]{Kish02} instead of \cite[Proposition 3.2]{Kish02}. Precisely, the map $\theta$ is given by sending the generators $s_k$ to a sequence with representatives $(v_m^*s_k)_{m\in\mathbb{N}}$, where $v_m$ is in the generalised GICAR algebra. Then, by Lemma \ref{lemma: GicarSubalg}, $v_m\in \cO_n^{\alpha^{(p,q)}}$ for any nonzero $p,q$, which finishes the proof.
\end{proof}

We shall now prove a series of regularity properties for the $\mathbb{R}$-$\C$-algebras $(\cO_2,\alpha^L)$ and $(\cO_n,\alpha^{(p,q)})$ when the pairs $(L_1,L_2)$ and $(p,q)$ consist of rationally dependent entries. In particular, both $\alpha^L$ and $\alpha^{(p,q)}$ are equivariantly $\Z$-stable.

\begin{prop}\label{prop: FixedPtRational}
Let $L=(L_1,L_2)\in\mathbb{R}^2$ such that $L_1/L_2\in\mathbb{Q}_+$, $(p,q)\in\mathbb{R}^2$ such that $p_1/p_2\in\mathbb{Q}_+$, and $n\geq 3$. Then the following statements hold.
\begin{enumerate}[label=\textit{(\roman*)}]
    \item $\cO_2^{\alpha^L}$ and $\cO_n^{\alpha^{(p,q)}}$ are simple, unital, infinite dimensional, monotracial AF-algebras.\label{item: SimpleAF}
    \item The flows $\alpha^L$ and $\alpha^{(p,q)}$ are equivariantly $\Z$-stable.\label{item: ZStable}
    \item For any separable $\alpha_\infty^L$-invariant $\C$-subalgebra $C\subseteq (\mathcal{O}_2)_{\infty,\alpha^L}$, and any separable $\alpha_\infty^{(p,q)}$-invariant $\C$-subalgebra $D\subseteq (\mathcal{O}_n)_{\infty,\alpha^{(p,q)}}$ there exist a unital $*$-homomorphism \[M_2\oplus M_3\to \left((\mathcal{O}_2)_\infty\cap C'\right)^{\alpha_\infty^L}\] and a unital $*$-homomorphism \[M_2\oplus M_3\to \left((\mathcal{O}_n)_\infty\cap D'\right)^{\alpha_\infty^{(p,q)}}.\]In particular, the flows $\alpha^L$ and $\alpha^{(p,q)}$ are equivariantly approximately divisible.\label{item: ApproxDiv}
\end{enumerate}
\end{prop}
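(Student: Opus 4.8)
The plan is to prove (i), use it to construct the matrix embeddings in (iii), and then read off (ii) from (iii): taking $C=\cO_2$ in (iii) and using that $((\cO_2)_\infty\cap(\cO_2)')^{\alpha_\infty^L}=F_\infty(\cO_2)^{\tilde{\alpha}^L_\infty}$ for the unital algebra $\cO_2$ yields equivariant approximate divisibility in the sense of Definition \ref{defn: EquivApproxDiv}, whence Lemma \ref{lemma: ApproxDiv} gives (ii). Throughout I treat $\cO_2^{\alpha^L}$; the case of $\cO_n^{\alpha^{(p,q)}}$ is identical, invoking the statements for $\cO_n$ in Lemmas \ref{lemma: GicarSubalg} and \ref{lemma: CentralSeqO2}.

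For (i), the hypothesis $L_1/L_2\in\mathbb{Q}_+$ lets me write $(L_1,L_2)=\lambda(a,b)$ with $\lambda>0$ and $a,b$ coprime positive integers, so that $\alpha^L$ factors through the weighted gauge circle action $z\cdot s_k=z^{w_k}s_k$ (with $w_1=a$, $w_2=b$) and $\cO_2^{\alpha^L}$ equals its fixed-point algebra. Unitality is clear and infinite-dimensionality follows from Lemma \ref{lemma: GicarSubalg}. For the AF property I would use the faithful conditional expectation $E(x)=\int_{\mathbb{T}}\gamma_z(x)\,dz$ together with the analysis of quasi-free actions on Cuntz algebras (cf.\ \cite{Eva80}) to exhibit $\cO_2^{\alpha^L}$ as the closed union of an increasing chain of finite-dimensional subalgebras; crucially, all weights have the same sign, which is what makes the fixed-point algebra tractable. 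Simplicity and uniqueness of the trace then follow from irreducibility of the resulting (stationary) Bratteli diagram, the positivity of the weights making its incidence data primitive, so that Perron--Frobenius yields a single tracial state.

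For (iii), set $B:=\cO_2^{\alpha^L}$. By (i), $B$ is a simple, unital, infinite-dimensional, monotracial AF-algebra, hence approximately divisible (AF-algebras have real rank zero and cancellation, and strict comparison together with density of the trace values on $K_0$ lets one split $[1_B]=2[f]+3[g]$, giving approximately central unital copies of $M_2\oplus M_3$); in particular there is a unital $*$-homomorphism $M_2\oplus M_3\to F_\infty(B)=B_\infty\cap B'$. Separately, Lemma \ref{lemma: CentralSeqO2} (the statement for $\cO_2$) provides a unital equivariant $*$-homomorphism $\theta\colon(\cO_2,\alpha^L)\to((\cO_2)_{\infty,\alpha^L}\cap(\cO_2)',\alpha_\infty^L)$; since $\theta$ is equivariant and $B$ consists of $\alpha^L$-fixed points, it restricts to a unital embedding $B\hookrightarrow F_\infty(\cO_2)^{\tilde{\alpha}^L_\infty}$. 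The point of $\theta$ is precisely to relocate the matrix units: the approximately central copies of $M_2\oplus M_3$ in $B$ are central only for $B$, whereas pushing them through $\theta$ turns them into sequences that are approximately central for all of $\cO_2$ and approximately fixed by $\alpha^L$. Collapsing the resulting double sequence by a standard reindexing (cf.\ \cite{KirchbergAbel}) produces a unital $*$-homomorphism $M_2\oplus M_3\to F_\infty(\cO_2)^{\tilde{\alpha}^L_\infty}$, i.e.\ equivariant approximate divisibility. To reach the stated relative-commutant refinement, I would enlarge $C$ so that $\cO_2\subseteq C$ and apply Kirchberg's reindexing machinery once more---using that the central-sequence algebras involved are $\sigma$-ideals---to transport any separable subalgebra of $F_\infty(\cO_2)^{\tilde{\alpha}^L_\infty}$ into $((\cO_2)_\infty\cap C')^{\alpha_\infty^L}$, yielding the required map $M_2\oplus M_3\to((\cO_2)_\infty\cap C')^{\alpha_\infty^L}$.

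The main obstacle is (i), specifically showing that in the rationally dependent regime the fixed-point algebra is genuinely AF and, above all, \emph{simple with a unique trace}. The delicate feature is that the obvious length filtration does not consist of subalgebras---products of weight-zero words of bounded length can produce words of unbounded length---so one cannot read off the AF structure naively and must instead extract the correct Bratteli diagram from the weighted-gauge data and verify its primitivity. Once (i) is secured, step (iii) is comparatively formal: it rests on the already established embedding $\theta$ of Lemma \ref{lemma: CentralSeqO2}, the approximate divisibility of simple infinite-dimensional AF-algebras, and two applications of routine reindexing; and (ii) is then immediate from Lemma \ref{lemma: ApproxDiv}.
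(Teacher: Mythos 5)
Your proposal has a genuine gap, and it sits exactly at the foundation: part \ref{item: SimpleAF}. You never prove it --- you outline a strategy (conditional expectation, extraction of a Bratteli diagram from the weighted-gauge data, primitivity, Perron--Frobenius) and you yourself flag it as ``the main obstacle'', without carrying it out. The sketch moreover presupposes unverified structural claims, notably that the diagram can be taken \emph{stationary} and primitive. The paper does not reprove any of this: after using invariance of the fixed-point algebra under rescaling of the flow to reduce to the case where $(L_1,L_2)$, resp.\ $(p,q)$, are coprime positive integers, it simply quotes \cite[Theorem 4.2]{BJO04}, which says precisely that $\cO_2^{\alpha^L}$ and $\cO_n^{\alpha^{(p,q)}}$ are simple, unital, monotracial AF-algebras (infinite-dimensionality coming from Lemma \ref{lemma: GicarSubalg}). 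Since everything downstream in your argument --- the embedding $M_2\oplus M_3\to\cO_2^{\alpha^L}$, hence \ref{item: ApproxDiv}, hence \ref{item: ZStable} --- rests on \ref{item: SimpleAF}, the proposal as written is incomplete; it is fixable by citation, but as a blind proof the hardest step is missing rather than proved.

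The rest is essentially the paper's argument, with some avoidable detours. Your comparison/trace-density route to splitting $[1]_0=2[f]_0+3[g]_0$ is correct (it is exactly what the paper does for $\cO_n$, and its remark after the proposition notes the uniform density argument also works for $\cO_2$; density of $\tau(K_0)$ is automatic here, since an infinite-dimensional AF-algebra with faithful trace contains projections of arbitrarily small positive trace). But the detour through \emph{approximately central} copies of $M_2\oplus M_3$ in $B:=\cO_2^{\alpha^L}$, and the subsequent double-sequence collapsing, buys you nothing: since $\theta$ from Lemma \ref{lemma: CentralSeqO2} already maps all of $B$ into $\left((\cO_2)_\infty\cap(\cO_2)'\right)^{\alpha^L_\infty}$, composing a plain unital embedding $M_2\oplus M_3\to B$ with $\theta|_B$ immediately gives equivariant approximate divisibility. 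For the relative-commutant refinement in \ref{item: ApproxDiv}, the paper does not reindex the finite-dimensional map after the fact; it first upgrades $\theta$ itself by combining Lemma \ref{lemma: CentralSeqO2} with \cite[Lemma 2.13]{SzaSSA2} to obtain an equivariant unital map $(\cO_2,\alpha^L)\to\left((\cO_2)_{\infty,\alpha^L}\cap C',\alpha^L_\infty\right)$, and only then composes. Your appeal to ``Kirchberg's reindexing machinery'' and $\sigma$-ideals is the right kind of tool, but for flows the statement that handles the continuous part is precisely that cited lemma, and invoking it would make the step rigorous. Finally, your derivation of \ref{item: ZStable} from \ref{item: ApproxDiv} via Lemma \ref{lemma: ApproxDiv} is valid and mildly different from the paper, which instead gets \ref{item: ZStable} directly by composing a unital embedding $\Z\hookrightarrow\cO_2^{\alpha^L}$ (available since the fixed-point algebra is $\Z$-stable by \cite[Theorem 5]{JiangSu99}) with the embedding from Lemma \ref{lemma: CentralSeqO2}.
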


\begin{proof}
Since the fixed-point algebra of a flow is invariant under scaling the flow, so are the properties of the flow being equivariantly approximately divisible or equivariantly $\Z$-stable. Replacing the flow $\alpha^L$ with the flow given by $t\mapsto \alpha_{-t}^L$ if necessary, we can assume that $L_1$ and $L_2$ are both positive. Then, suppose that $L_1/L_2=m/n$ for some coprime positive integers $m$ and $n$. Replacing the flow $\alpha^L$ with the flow given by $t\mapsto \alpha_{nt/L_2}^L$ if necessary, we can assume that $L_1$ and $L_2$ are coprime, positive integers. By a similar argument, we can also assume that $p$ and $q$ are coprime positive integers. Hence, $\cO_2^{\alpha^L}$ and $\cO_n^{\alpha^{(p,q)}}$ are simple, unital, monotracial AF-algebras by \cite[Theorem 4.2]{BJO04}. Furthermore, the GICAR algebra is a $\C$-subalgebra of $\cO_2^{\alpha^L}$ by Lemma \ref{lemma: GicarSubalg}, so $\cO_2^{\alpha^L}$ is infinite dimensional. Likewise, Lemma \ref{lemma: GicarSubalg} also yields that $\cO_n^{\alpha^{(p,q)}}$ is infinite dimensional.

We will now prove \ref{item: ZStable}. First note that \ref{item: SimpleAF} implies that $\cO_2^{\alpha^L}$ and $\cO_n^{\alpha^{(p,q)}}$ are $\Z$-stable \cite[Theorem 5]{JiangSu99}. Then, Lemma \ref{lemma: CentralSeqO2} yields unital $*$-homomorphisms \[\zeta\colon \cO_2^{\alpha^L}\to \left((\cO_2)_\infty\cap(\cO_2)'\right)^{\alpha_\infty^L}\] and \[\tilde{\zeta}\colon \cO_n^{\alpha^{(p,q)}}\to \left((\cO_n)_\infty\cap(\cO_n)'\right)^{\alpha_\infty^{(p,q)}}.\] Since $\cO_2^{\alpha^L}$ and $\cO_n^{\alpha^{(p,q)}}$ are unital and $\Z$-stable, there exist unital embeddings $\iota\colon\Z\hookrightarrow \cO_2^{\alpha^L}$ and $\tilde{\iota}\colon\Z\hookrightarrow \cO_n^{\alpha^{(p,q)}}$. Thus, $\zeta\circ\iota$ and $\tilde{\zeta}\circ\tilde{\iota}$ are unital embeddings which witness the equivariant $\Z$-stability of $\alpha^L$ and $\alpha^{(p,q)}$ respectively (see \cite[Corollary 3.8]{SzaSSA} and Remark \ref{rmk: EquivZStable}).

We shall now prove \ref{item: ApproxDiv}. Let $C\subseteq (\cO_2)_{\infty,\alpha^L}$ be a separable, $\alpha_\infty^L$-invariant $\C$-subalgebra. Combining Lemma \ref{lemma: CentralSeqO2} and \cite[Lemma 2.13]{SzaSSA2}, it follows that there exists a unital and equivariant $*$-homomorphism $\tilde{\theta}\colon (\cO_2,\alpha^L)\to \left((\cO_2)_{\infty,\alpha^L}\cap C',\alpha_\infty^L\right)$. This then restricts to a unital $*$-homomorphism \[\theta\colon \cO_2^{\alpha^L}\to \left((\cO_2)_\infty\cap C'\right)^{\alpha_\infty^L}.\] By the same argument, there exists a unital $*$-homomorphism $\tilde{\theta}\colon \cO_n^{\alpha^{(p,q)}}\to \left((\cO_n)_\infty\cap D'\right)^{\alpha_\infty^{(p,q)}}$.

If $L_1=L_2$, rescaling if necessary, we can assume $L_1=L_2=1$. It is well known that $\cO_2^{\alpha^L}\cong M_{2^\infty}$ in this case and there exists a unital embedding $M_2\oplus M_3\to M_{2^\infty}$.
Assuming that $L_1<L_2$, we first claim that $\cO_2^{\alpha^L}$ has a projection with trace in $(1/(k+1),1/k)$ for some natural number $k>1$. 
Note that $\tau(s_2s_2^*)=e^{-\beta L_2}$ by \cite[Theorem 4.2]{BJO04}, where $e^{-\beta L_1}+e^{-\beta L_2}=1$. Note that $e^{-\beta L_2}<1/2$. If there exists a natural number $k>2$ such that $e^{-\beta L_2}=1/k$, then $e^{-\beta L_1}=(k-1)/k$. Let $m\geq 2$ such that $e^{-\beta L_1 m}<1/2$.
If there exists a natural number $l>1$ such that $(k-1)^m/k^m=1/l$, then $(k-1)/k=1/s$ for some natural number $s$. This is a contradiction since $k>2$. Therefore, either $\tau(s_2s_2^*)\in (1/(r+1),1/r)$ for some natural number $r>1$ or $\tau(s_1^m(s_1^*)^m)\in (1/(s+1),1/s)$ for some natural numbers $m,s>1$. 

Since $\cO_2^{\alpha^L}$ has strict comparison of projections with respect to its unique trace, it follows that there exists a natural number $k>1$ and a projection $p\in\cO_2^{\alpha^L}$ such that $k[p]_0\leq [1]_0\leq (k+1)[p]_0$ in $K_0(\cO_2^{\alpha^L})$. Then let $x=(k+1)[p]_0-[1]_0$ and $y=[1]_0-k[p]_0$ both positive in $K_0(\cO_2^{\alpha^L})$ such that $kx+(k+1)y=[1]_0$.
Thus, there exists a unital $*$-homomorphism $j\colon M_k\oplus M_{k+1}\to \cO_2^{\alpha^L}$ \cite[Proposition 1.3.4 (iii)]{rordambook}. Then there exists a unital $*$-homomorphism $\iota\colon M_2\oplus M_3\to M_k\oplus M_{k+1}$, so $\theta\circ j\circ\iota$ gives a unital $*$-homomorphism $M_2\oplus M_3\to \left((\cO_2)_\infty\cap C'\right)^{\alpha_\infty^L}$. Taking $C=\cO_2$ witnesses the fact that $\alpha^L$ is equivariantly approximately divisible. 

For the flow $\alpha^{(p,q)}$, let $\tau$ be the unique trace on $\cO_n^{\alpha^{(p,q)}}$ and note that $\tau(K_0(\cO_n^{\alpha^{(p,q)}}))=\mathbb{Z}[e^{-\beta}]$ (see for example \cite[Eq (5.22)]{BJO04}), where $\beta$ is given by the equation \[e^{-\beta p}+e^{-\beta (p+q)}+\ldots + e^{-\beta (p+(n-1)q)}=1.\] Since $\mathbb{Z}[e^{-\beta}]$ is dense in $\mathbb{R}$, there exists $x\in K_0(\cO_n^{\alpha^{(p,q)}})_+$ such that \[\frac{1}{3}<\tau(x)<\frac{1}{2}.\] But $\cO_n^{\alpha^{(p,q)}}$ has strict comparison with respect to $\tau$, so $2x\leq [1]_0\leq 3x$. Taking $g=3x-[1]_0$ and $h=[1]_0-2x$, we see that $[1]_0=2g+3h$. This yields a unital $*$-homomorphism $M_2\oplus M_3\to \cO_n^{\alpha^{(p,q)}}$ \cite[Proposition 1.3.4 (iii)]{rordambook}, which finishes the argument.
\end{proof}

\begin{rmk}
The proof of \ref{item: ApproxDiv} of Proposition \ref{prop: FixedPtRational} could be made uniform by using that in all cases, the image of the pairing map is dense in $\mathbb{R}$. However, for the flow $\alpha^L$, I opted to show the existence of a concrete projection in the fixed-point algebra with trace between $(1/(k+1),1/k)$ for some $k>1$ in the hope that this can be useful for showing that quasi-free flows on $\cO_2$ are equivariantly $\Z$-stable (not only generically).
\end{rmk}

As a consequence of Proposition \ref{prop: FixedPtRational}, we get that the quasi-free flows on Cuntz algebras are generically equivariantly $\Z$-stable.

\begin{theorem}\label{thm: EquivZStable}
\begin{enumerate}[label=\textit{(\roman*)}]
\item The set of $\{L_2/L_1\in\mathbb{R}_+\setminus\mathbb{Q}\mid L_1,L_2\in\mathbb{R}\}$ such that the flow $\alpha^L$ is equivariantly approximately divisible is a dense subset of $\mathbb{R}_+\setminus\mathbb{Q}$ of second Baire category. In particular, $\alpha^L$ is generically equivariantly $\Z$-stable.\label{item: O2}
\item The set of $\{q/p\in\mathbb{R}_+\setminus\mathbb{Q}\mid p,q\in\mathbb{R}\}$ such that the flow $\alpha^{(p,q)}$ on $\cO_n$ is equivariantly approximately divisible is a dense subset of $\mathbb{R}_+\setminus\mathbb{Q}$ of second Baire category. In particular, $\alpha^{(p,q)}$ is generically equivariantly $\Z$-stable.\label{item: On}
\end{enumerate}
\end{theorem}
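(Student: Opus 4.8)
The theorem asserts that equivariant approximate divisibility (and hence equivariant $\mathcal{Z}$-stability) holds on a dense, second-category subset of the irrational slopes $\mathbb{R}_+\setminus\mathbb{Q}$, for both the $\mathcal{O}_2$ flows $\alpha^L$ and the $\mathcal{O}_n$ flows $\alpha^{(p,q)}$. The key already-proven input is Proposition~\ref{prop: FixedPtRational}\ref{item: ApproxDiv}, which establishes equivariant approximate divisibility \emph{exactly} at the rational slopes (with the same sign). So the whole problem is to upgrade a property known on a dense set of rational parameters to a generic statement on the irrational ones. Let me think about what structure is available to pull this off.

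**Density at irrationals.** The plan is to produce, at each target irrational slope, a sequence of rational approximants and transport the divisibility witness. Concretely, I would fix an irrational slope and choose rationals $p_m/q_m$ converging to it. At each rational slope $\alpha^{L(m)}$ is equivariantly approximately divisible, giving a unital $*$-homomorphism $M_2\oplus M_3\to F_\infty(\mathcal{O}_2)^{\tilde\alpha_\infty^{L(m)}}$. The hope is that a reindexing/diagonal argument across the sequence of central-sequence algebras produces such a homomorphism for the limiting flow. The cleanest route is to work inside the fixed central sequence algebra $(\mathcal{O}_2)_\infty \cap (\mathcal{O}_2)'$: as the defining parameters vary continuously in $L$, the flow $\alpha^L_\infty$ varies, and a $*$-homomorphism $M_2\oplus M_3$ whose image is invariant under $\alpha^{L(m)}_\infty$ for slopes $L(m)\to L$ should, after passing to a reparametrized/diagonal sequence, land invariantly under $\alpha^L_\infty$. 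This is essentially a continuity-of-generic-properties argument in the parameter.

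**The $G_\delta$/second-category structure.** For the second Baire category claim I would express the set of ``good'' slopes as a countable intersection of dense open sets. The natural approach is to quantify equivariant approximate divisibility: for each $k\in\mathbb{N}$ and each finite $\mathcal{F}\subseteq\mathcal{O}_2$, let $U_{k,\mathcal{F}}$ be the set of slopes admitting an approximate unital embedding of $M_2\oplus M_3$ that is $\tfrac{1}{k}$-central on $\mathcal{F}$ and $\tfrac{1}{k}$-equivariant on a large compact set of $t$'s. Each $U_{k,\mathcal{F}}$ should be open in the slope (by norm-continuity of $L\mapsto\alpha^L_t(a)$ on elements and the finite, approximate nature of the condition) and dense (because it contains all the rational slopes by Proposition~\ref{prop: FixedPtRational}, which are dense in $\mathbb{R}_+\setminus\mathbb{Q}$ in the relevant sense, and density propagates by the approximation in the previous paragraph). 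Then the intersection over a countable dense family of $\mathcal{F}$'s and all $k$ is a dense $G_\delta$, hence second category, and a slope in this intersection yields a genuine $M_2\oplus M_3 \to F_\infty^{\tilde\alpha_\infty}$ via a diagonal sequence together with Kirchberg's $\varepsilon$-test, exactly as in Lemma~\ref{lemma: RokhlinLemmaGabor}. The passage from ``equivariantly approximately divisible'' to ``equivariantly $\mathcal{Z}$-stable'' is then immediate from Lemma~\ref{lemma: ApproxDiv}, and the $\alpha^{(p,q)}$ case runs identically with $\mathcal{O}_n$ in place of $\mathcal{O}_2$.

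**Main obstacle.** The hard part will be the \textbf{openness} of $U_{k,\mathcal{F}}$ in the slope parameter. Approximate centrality on $\mathcal{F}$ is robust, but the equivariance condition $\|\tilde\alpha^L_{\infty,t}(x)-x\|$ small \emph{uniformly on a compact set of $t$} must be shown to persist under small perturbations of $L$; since the witness $M_2\oplus M_3$ lives in a sequence algebra and the flow only acts continuously (not uniformly in $L$ on $F_\infty$), one has to be careful that the same witness remains approximately invariant when the slope wiggles. I expect this is controlled by the fact that the divisibility witnesses can be chosen inside the \emph{fixed}-point algebra of the GICAR subalgebra (Lemma~\ref{lemma: GicarSubalg}), where $\alpha^L_t$ acts by a formula depending continuously and uniformly-on-compacta on $L$, so that a uniform modulus can be extracted and openness follows. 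Verifying this uniform dependence precisely is the crux of the argument.
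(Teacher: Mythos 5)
Your overall architecture is exactly the paper's: use Proposition \ref{prop: FixedPtRational}\ref{item: ApproxDiv} at rational slopes, define sets of slopes by the existence of \emph{finite} approximate witnesses (approximate matrix units for $M_2\oplus M_3$ that are approximately central on $\mathcal{F}$ and approximately invariant on $|t|\le T$), show these sets are open and dense, intersect over a countable family, invoke Baire, reassemble a genuine unital $*$-homomorphism into $F_\infty(\cO_2)^{\tilde\alpha_\infty}$ by a diagonal/$\varepsilon$-test argument, and finish with Lemma \ref{lemma: ApproxDiv}. Up to that point the proposal matches the paper's proof of Theorem \ref{thm: EquivZStable} essentially verbatim.

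However, your proposed resolution of the step you yourself single out as the crux --- openness --- contains a genuine error: the divisibility witnesses \emph{cannot} be chosen inside the GICAR subalgebra of Lemma \ref{lemma: GicarSubalg}. The GICAR algebra admits a character: in its Bratteli diagram (Pascal's triangle) the corner vertices are one-dimensional and the corner block at level $n+1$ receives an edge only from the corner block at level $n$, so the compatible projections onto these blocks define a unital $*$-homomorphism from the GICAR algebra onto $\mathbb{C}$. Composing a unital embedding $M_2\oplus M_3\to \mathrm{GICAR}$ with this character would give a unital $*$-homomorphism $M_2\oplus M_3\to\mathbb{C}$, which does not exist. (The same applies to the generalised GICAR algebra.) This is precisely why Proposition \ref{prop: FixedPtRational} works in the \emph{full} fixed-point algebra $\cO_2^{\alpha^L}$ at a rational slope --- which is simple and monotracial, unlike GICAR --- and why the theorem is only generic rather than valid for all parameters: there is no single slope-independent subalgebra in which to place the witnesses. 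The correct argument, and the one in the paper, does not need any uniformity of the flow on sequence algebras in the parameter $L$: at a rational slope $q$ one extracts finite-stage representatives in $\cO_2$ of the central-sequence embedding of Proposition \ref{prop: FixedPtRational}\ref{item: ApproxDiv} (built via Lemma \ref{lemma: CentralSeqO2} from elements of $\cO_2^{\alpha^{(1,q)}}$), and these finitely many fixed elements are \emph{exactly} $\alpha^{(1,q)}$-invariant; since for each fixed $x\in\cO_2$ the map $(L_2,t)\mapsto\alpha_t^{(1,L_2)}(x)$ is jointly continuous (check it on linear combinations of words $s_\mu s_\nu^*$, where the action is by explicit phases, and extend by density using that the automorphisms are isometric), these same elements remain $\varepsilon$-invariant for all $|t|\le T$ once $L_2$ lies in a sufficiently small neighbourhood of $q$. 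That continuity statement for finitely many fixed elements of $\cO_2$, not a choice of witnesses in GICAR, is what makes the sets open.
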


\begin{proof}
The proof of \ref{item: On} follows exactly like the proof of \ref{item: O2}, so we will only write the details for \ref{item: O2}. 
Since the property of being equivariantly approximately divisible is invariant under scaling the flow $\alpha^L$, we can assume that $L_1=1$. If $L_2\in\mathbb{Q}_+$, then $\alpha^L$ is equivariantly approximately divisible by \ref{item: ApproxDiv} of Proposition \ref{prop: FixedPtRational}.

Then, for each finite set $\mathcal{F}\subseteq \cO_2$, $\varepsilon>0$, $T>0$, and any $q\in \mathbb{Q}_+$, there exists a neighbourhood $V(q,\mathcal{F},\varepsilon,T)$ such that we have approximate generators of $M_2\oplus M_3$ which approximately commute with $\mathcal{F}$ and are approximately fixed by $\alpha_t^L$ for any $L=(1,L_2)$, where $L_2\in V(q,\mathcal{F},\varepsilon,T)$ and any $|t|\leq T$. Precisely, for any $L_2\in V(q,\mathcal{F},\varepsilon, T)$, there exist positive contractions $(e_{i,j})_{i,j=1}^2, (f_{l,k})_{l,k=1}^3\subseteq \cO_2$ such that for any $1\leq i,j,i',j'\leq 2$ and any $1\leq l,k,l',k'\leq 3$
\begin{itemize}
    \item $\max\{\|e_{i,j}-e_{i,j}^*\|,\|f_{l,k}-f_{l,k}^*\|\}\leq\varepsilon$,
    \item $\max\{\|e_{i,j}e_{i',j'}-\delta_{j,i'}e_{i,j'}\|, \|f_{l,k}f_{l',k'}-\delta_{k,l'}f_{l,k'}\|\}\leq\varepsilon$,
    \item $\|e_{i,j}f_{k,l}\|,\|f_{k,l}e_{i,j}\|\leq\varepsilon$,
    \item $\max\{\|e_{i,j}x-xe_{i,j}\|,\|f_{k,l}x-xf_{k,l}\|\}\leq\varepsilon$, for any $x\in\mathcal{F}$,
    \item $\max_{|t|\leq T}\|\alpha_t^{(1,L_2)}(e_{i,j})-e_{i,j}\|\leq\varepsilon$, and
    \item $\max_{|t|\leq T}\|\alpha_t^{(1,L_2)}(f_{k,l})-f_{k,l}\|\leq\varepsilon$.
\end{itemize}

Let \[G(\mathcal{F},\varepsilon, T)=\bigcup_{q\in \mathbb{Q}_+} V(q,\mathcal{F},\varepsilon, T),\] which is a dense open set in $\mathbb{R}_+$. Letting $\mathcal{F}(\mathcal{O}_2)$ be the set of finite sets of $\cO_2$, we set \[G=\bigcap_{\mathcal{F}\in\mathcal{F}(\cO_2)}\bigcap_{n\in\mathbb{N}}G(\mathcal{F},1/n,n).\]Then $G$ is contained in the set of $L_2\in\mathbb{R}_+$ such that $\alpha^L$ is equivariantly approximately divisible, and by the Baire category theorem, $G$ is a dense $G_\delta$ subset of $\mathbb{R}_+$ containing $\mathbb{Q}_+$. In particular, $G\setminus\mathbb{Q}$ is a dense subset of $\mathbb{R}_+\setminus\mathbb{Q}$ of second Baire category. That $\alpha^L$ is generically equivariantly $\Z$-stable now follows from Lemma \ref{lemma: ApproxDiv}.
\end{proof}

\begin{rmk}\label{rmk: EquivZStableFlows}
If $L_1$ and $L_2$ are rationally independent and have different signs, Kishimoto showed in \cite{Kish02} that $\alpha^L$ has the Rokhlin property. Combined with Szabó classification of Rokhlin flows on Kirchberg algebras \cite{SzaRokhlinFlows}, one gets that $\alpha^L$ is equivariantly $\Z$-stable.
Likewise, combining the main result of \cite{KishOneCocycle} and \cite{SzaRokhlinFlows}, we can also obtain equivariant $\Z$-stability for a large class of quasi-free flows on $\cO_n$. 
In particular, if $p,q$ are rationally independent and $p$ and $p+(n-1)q$ have different signs, then $\alpha^{(p,q)}$ is equivariantly $\Z$-stable.
\end{rmk}

As a consequence of the theorem above, we recover a result of Robert \cite[Corollary 7.0.3]{Rob12} (see also \cite{Dean01}).

\begin{cor}\label{cor: GenericW}
For $L=(1,L_0)\in\mathbb{R}^2$, the set of $L_0\in\mathbb{R}_+\setminus\mathbb{Q}$ such that the crossed product $\cO_2\rtimes_{\alpha^L}\mathbb{R}$ is isomorphic to $\mathcal{W}\otimes\mathcal{K}$ is a dense subset of $\mathbb{R}_+\setminus\mathbb{Q}$ of second Baire category.
\end{cor}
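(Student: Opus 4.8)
The plan is to recognise $B := \cO_2 \rtimes_{\alpha^L} \mathbb{R}$ as a classifiable stably projectionless $\mathrm{C}^*$-algebra and to supply the one regularity property that is not automatic, namely $\Z$-stability, from Theorem \ref{thm: EquivZStable}. Throughout, fix $L = (1, L_0)$ with $L_0 \in \mathbb{R}_+ \setminus \mathbb{Q}$, so that $1$ and $L_0$ are rationally independent with the same sign. By Kishimoto's dichotomy \cite{Kish02}, $\alpha^L$ then has a faithful unique KMS state rather than the Rokhlin property, and the crossed product $B$ is simple, separable, and nuclear, the last because $\mathbb{R}$ is amenable. Via the dual weight, the unique KMS state furnishes a unique densely defined lower semicontinuous trace on $B$ up to scaling; since this trace is scaled nontrivially by the dual flow $\widehat{\alpha^L}$, it is necessarily unbounded. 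All of this holds for every $L_0 \in \mathbb{R}_+ \setminus \mathbb{Q}$ and constitutes the structural input of \cite{Rob12}.

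Next I would record the K-theoretic data. By the Connes--Thom isomorphism, $K_*(B) \cong K_{*+1}(\cO_2) = 0$, so $B$ has vanishing K-theory; moreover $B$ lies in the UCT class, since $\cO_2$ does and this class is preserved under crossed products by $\mathbb{R}$. As $B$ is simple and carries a trace, it is stably finite, so the vanishing of $K_0(B)$ forces $B$ to contain no nonzero projection, even after tensoring with $\mathcal{K}$; hence $B$ is stably projectionless.

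The genuinely new ingredient is $\Z$-stability. By Theorem \ref{thm: EquivZStable}\ref{item: O2}, the set of $L_0 \in \mathbb{R}_+ \setminus \mathbb{Q}$ for which $\alpha^L$ is equivariantly $\Z$-stable is a dense subset of $\mathbb{R}_+ \setminus \mathbb{Q}$ of second Baire category. For such an $L_0$, the pair $(\cO_2, \alpha^L)$ is cocycle conjugate to $(\cO_2 \otimes \Z, \alpha^L \otimes \id_{\Z})$, so Proposition \ref{prop: CocycleConj}\ref{item: Cocycle1} makes the corresponding crossed products conjugate; combining this with the canonical identification $(\cO_2 \otimes \Z) \rtimes_{\alpha^L \otimes \id_{\Z}} \mathbb{R} \cong B \otimes \Z$ yields $B \cong B \otimes \Z$, that is, $B$ is $\Z$-stable.

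Finally I would appeal to classification. For the generic $L_0$ above, $B$ is a simple, separable, nuclear, $\Z$-stable, stably projectionless $\mathrm{C}^*$-algebra with vanishing K-theory, a unique (up to scaling) unbounded densely defined lower semicontinuous trace, and the UCT; these invariants are exactly those of $\mathcal{W} \otimes \mathcal{K}$, so the classification of stably projectionless algebras (see \cite{Rob12}) gives $B \cong \mathcal{W} \otimes \mathcal{K}$. As this holds on a dense second-category subset of $\mathbb{R}_+ \setminus \mathbb{Q}$, the corollary follows. I expect the main obstacle to be the tracial bookkeeping — checking that the trace is unique up to scaling and unbounded, so that the target is $\mathcal{W} \otimes \mathcal{K}$ and not $\mathcal{W}$ — since the only essentially new point, $\Z$-stability, is delivered directly by Theorem \ref{thm: EquivZStable}.
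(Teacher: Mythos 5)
Your overall route is the one the paper intends: the corollary is stated without proof precisely because it is meant to follow by combining the structural facts about $B=\cO_2\rtimes_{\alpha^L}\mathbb{R}$ (simplicity, stable projectionlessness and uniqueness of the densely defined trace from \cite[Theorem 4.1]{KishKumj96}, vanishing $K$-theory from Connes' Thom isomorphism \cite{ConnesThom}, the UCT via \cite[22.3.5 (g)]{blackadar}, and nuclearity) with the generic $\Z$-stability supplied by Theorem \ref{thm: EquivZStable}, and then invoking an abstract classification theorem. Your tracial bookkeeping (unboundedness of the trace from the scaling relation $\tau\circ\widehat{\alpha^L}_t=e^{-\beta t}\tau$, which is what rules out the target $\mathcal{W}$ in favour of $\mathcal{W}\otimes\mathcal{K}$) and your derivation of $\Z$-stability of $B$ from equivariant $\Z$-stability, via Proposition \ref{prop: CocycleConj}\ref{item: Cocycle1} and the identification $(\cO_2\otimes\Z)\rtimes_{\alpha^L\otimes\id_\Z}\mathbb{R}\cong B\otimes\Z$, are both correct.

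The one step that does not work as written is the final citation. The classification theorem of \cite{Rob12} classifies, by the Cuntz semigroup, $\C$-algebras that are inductive limits of one-dimensional NCCW complexes (with trivial $K_1$); it cannot be applied to the abstract algebra $B$, which is not known to be such an inductive limit. Indeed, this is exactly why Robert's original proof of \cite[Corollary 7.0.3]{Rob12} had to pass through Dean's inductive-limit description of these crossed products \cite{Dean01}, and the whole point of the route through Theorem \ref{thm: EquivZStable} is to avoid that input. What your argument needs instead is an abstract classification theorem whose hypotheses are precisely the invariants you assembled: the classification of simple, separable, nuclear, $\KK$-contractible $\C$-algebras with finite nuclear dimension due to Elliott--Gong--Lin--Niu, which shows that any such algebra carrying a unique (up to scalar) densely defined, lower semicontinuous, unbounded trace is isomorphic to $\mathcal{W}\otimes\mathcal{K}$; finite nuclear dimension is then obtained from $\Z$-stability by \cite{CE20}, exactly as the paper does in the proof of Theorem \ref{thm: GenericClassif}. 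With that substitution your proof is complete and agrees with the paper's intended argument.
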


Recall that the strategy for proving Theorem \ref{thm: IntroGenericClassif} is to show that the dual $\mathbb{R}$-action has the Rokhlin property generically. We shall first prove that the dual actions of rational quasi-free flows have this property.

\begin{theorem}\label{thm: RokhlinRationalQFree}
Let $L=(L_1,L_2),(p,q)\in\mathbb{R}^2$ such that $L_1/L_2\in\mathbb{Q}_+$, $p/q\in\mathbb{Q}_+$, and $n\geq 3$.
\begin{enumerate}[label=\textit{(\roman*)}]
\item The dual flow $\widehat{\alpha^L}$ on $\cO_2\rtimes_{\alpha^L}\mathbb{R}$ has the Rokhlin property.\label{item: RokhO2} 
\item The dual flow $\widehat{\alpha^{(p,q)}}$ on $\cO_n\rtimes_{\alpha^{(p,q)}}\mathbb{R}$ has the Rokhlin property.\label{item: RokhOn} 
\end{enumerate}
\end{theorem}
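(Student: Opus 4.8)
The plan is to invoke the duality from \cite{CNS25} (the $\C$-analogue of \cite[Lemma 6.45]{MaTo16}), by which the dual flow $\widehat{\alpha^L}$ has the Rokhlin property if and only if $\alpha^L$ is pointwise strongly approximately inner in the sense of Definition \ref{defn: ApproxInner}; the same applies to $\widehat{\alpha^{(p,q)}}$. Hence the task reduces to proving that $\alpha^L$ and $\alpha^{(p,q)}$ are pointwise strongly approximately inner. I will describe the argument for \ref{item: RokhO2}; part \ref{item: RokhOn} is identical upon replacing the canonical endomorphism of $\cO_2$ with that of $\cO_n$ and using part \ref{item: Central2} of Lemma \ref{lemma: CentralSeqO2} together with the $\cO_n$-statements of Proposition \ref{prop: FixedPtRational}. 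Since pointwise strong approximate innerness is invariant under rescaling the flow, I may assume that $L_1,L_2$ are coprime positive integers, so that $F:=\cO_2^{\alpha^L}$ is a simple, unital, infinite-dimensional, monotracial AF-algebra by Proposition \ref{prop: FixedPtRational}\ref{item: SimpleAF}.

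Let $\lambda(x)=s_1xs_1^*+s_2xs_2^*$ be the canonical endomorphism of $\cO_2$. Since $\alpha_t^L\circ\lambda=\lambda\circ\alpha_t^L$, the map $\lambda$ restricts to a unital injective endomorphism $\lambda_F$ of $F$, and as $F$ is monotracial with unique trace $\tau$ and $\lambda_F$ is unital, we have $\tau\circ\lambda_F=\tau$ automatically. Consequently the stationary inductive limit $\tilde F=\varinjlim(F,\lambda_F)$ is a simple, separable, unital, nuclear, monotracial AF-algebra, which is $\Z$-stable by \cite[Theorem 5]{JiangSu99} since it is infinite-dimensional; and $\lambda_F$ induces the shift automorphism $\tilde\lambda\in\Aut(\tilde F)$. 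The crux is to verify that $\tilde\lambda$ satisfies the hypotheses of Lemma \ref{lemma: RokhlinLemmaGabor}, i.e.\ that it is strongly outer and equivariantly approximately divisible, and therefore has the Rokhlin property. Strong outerness is the analogue for $\tilde\lambda$ of the well-known strong outerness of shifts on the CAR algebra \cite{BKSR93,KishShifts96,RohlinShiftsKish00}: one shows that the automorphism induced by $\tilde\lambda^k$ on the trace-von Neumann completion (a hyperfinite $\mathrm{II}_1$-factor) is outer for every $k\neq 0$, using that $\lambda$ strictly advances the word-length filtration of $F$. Equivariant approximate divisibility requires approximately central, approximately $\tilde\lambda$-invariant copies of $M_2\oplus M_3$, which I expect to obtain by combining the unital embedding $M_2\oplus M_3\hookrightarrow F$ produced in the proof of Proposition \ref{prop: FixedPtRational}\ref{item: ApproxDiv} with the absorption features of the stationary limit.

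To transfer the Rokhlin property of $\tilde\lambda$ to the flow, I approximate its Rokhlin projections at a finite stage of the inductive limit: for every $m\in\mathbb{N}$ and $\varepsilon>0$ this yields approximately central projections $e,f\in F$ with $\bigl\|\sum_{j=0}^{m-1}\lambda^j(e)+\sum_{l=0}^{m}\lambda^l(f)-1\bigr\|\leq\varepsilon$, a Rokhlin-like property of $\lambda$ on $F$. Now fix $t_0\in\mathbb{R}$ and set $v_N=\sum_{|\mu|=N}e^{it_0(L_1|\mu|_1+L_2|\mu|_2)}s_\mu s_\mu^*\in F$, a unitary fixed by $\alpha^L$; a direct computation gives $v_Ns_kv_N^*=e^{it_0L_k}s_k\,\lambda^{N-1}(W^*)$ with $W=\sum_k e^{it_0L_k}s_ks_k^*\in F$. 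Thus $v_N$ already produces the desired phases $\alpha_{t_0}^L(s_k)=e^{it_0L_k}s_k$ up to a tail error $\lambda^{N-1}(W^*)$ concentrated deep in the filtration, and the one-step shift room provided by the Rokhlin tower $\{\lambda^j(e),\lambda^l(f)\}$ is exactly what is needed to telescope this error away. Splicing the $v_N$ along the tower should produce a unitary $w\in F$ with $\|\Ad(w)(a)-\alpha_{t_0}^L(a)\|$ arbitrarily small on any prescribed finite subset of $\cO_2$; since $w\in F$ it is automatically fixed by $\alpha^L$, so the resulting contractions satisfy all three conditions of Definition \ref{defn: ApproxInner}, the invariance condition holding exactly. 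Letting $m\to\infty$ and $\varepsilon\to0$ then completes the proof.

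The main obstacle is this final splicing step: one must produce a single unitary in $F$ that simultaneously implements $\alpha_{t_0}^L$ on all of $\cO_2$ (not merely on $F$ and the generators), is approximately central in $F$, and cancels the tail error $\lambda^{N-1}(W^*)$. Reconciling these three requirements — for which the Rokhlin tower of $\lambda$ and the coherent family $(v_N)_N$ are the essential inputs — is the technical heart of the argument and is where Kishimoto's method from \cite{Kish02} gets repackaged into the present framework. A secondary but still delicate point is the verification of strong outerness and equivariant approximate divisibility of the shift $\tilde\lambda$, which underpins the application of Lemma \ref{lemma: RokhlinLemmaGabor}.
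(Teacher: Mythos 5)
Your architecture coincides with the paper's---the reduction via \cite[Corollary 3.8]{CNS25} to pointwise strong approximate innerness, the rescaling to coprime positive integers, the stationary inductive limit $(\tilde{F},\tilde{\lambda})$ of $(F,\lambda)$, and the appeal to Lemma \ref{lemma: RokhlinLemmaGabor} via strong outerness and equivariant approximate divisibility---but the step you yourself call the technical heart is a genuine gap, and the splicing as you sketch it would fail. Note first that your goal $\|ws_kw^*-e^{it_0L_k}s_k\|\le\varepsilon$ for $k=1,2$ is equivalent, because $ws_kw^*=w\lambda(w^*)s_k$, to the single coboundary estimate $\|u_{t_0}-w\lambda(w^*)\|\le\varepsilon$ with $u_{t_0}=\sum_k e^{it_0L_k}s_ks_k^*$; your $v_N$ is exactly the cocycle $u_{t_0}\lambda(u_{t_0})\cdots\lambda^{N-1}(u_{t_0})$, so the whole problem is approximate vanishing of its $1$-cohomology with respect to $\lambda$. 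Splicing this cocycle along the two Rokhlin towers does not simply telescope: setting $z=\sum_{j=0}^{m-1}v_j\lambda^j(e)+\sum_{l=0}^{m}v_l\lambda^l(f)$ (with $v_0=1$), the sums $u_{t_0}\lambda(z)$ and $z$ agree term by term except at the bottoms and tops of the towers, where the mismatch has norm comparable to $\|v_m-1\|$ and $\|v_{m+1}-1\|$---of order $2$ in general, not $O(1/m)$. Repairing this requires Kishimoto's path-averaging argument (deforming the cocycle back to $1$ gradually along the tower, using that unitaries in an AF algebra are of the form $e^{ih}$ up to small error); that is an actual stability theorem, not a routine estimate, and such theorems are formulated for automorphisms, not for the endomorphism $\lambda$ of $F$.

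This is precisely where the paper proceeds differently: having obtained the Rokhlin property of $\tilde{\lambda}$ from Lemma \ref{lemma: RokhlinLemmaGabor} (as you propose), it quotes Kishimoto's stability results \cite[Propositions 2.2 and 2.4]{RohlinShiftsKish00} for the \emph{automorphism} $\tilde{\lambda}$ of $\tilde{F}$, producing a unitary $\tilde{v}\in\tilde{F}$ with $\|u_{t_0}-\tilde{v}\tilde{\lambda}(\tilde{v}^*)\|\le 1/(2m)$; it then approximates $\tilde{\lambda}^k(\tilde{v})$ by a unitary $w\in F$ and descends to $F$ using the exact identity $u_{t_0}=v_k\lambda^k(u_{t_0})\lambda(v_k^*)$, ending with the unitary $v_kw\in F$. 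So you should either quote those stability theorems on $\tilde{F}$ (this is the entire reason the enveloping automorphism is introduced) or carry out the path-averaged splicing in full. A second, smaller gap: equivariant approximate divisibility of $\tilde{\lambda}$ asks for copies of $M_2\oplus M_3$ in $(\tilde{F}_\infty\cap\tilde{F}')^{\tilde{\lambda}_\infty}$, and $\tilde{\lambda}_\infty$-invariance of a sequence from $F$ amounts to asymptotic commutation with $s_1,s_2$, hence with all of $\cO_2$; this is strictly stronger than a unital embedding $M_2\oplus M_3\hookrightarrow F$ plus unexplained ``absorption features''. The paper obtains it from Proposition \ref{prop: FixedPtRational}\ref{item: ApproxDiv} applied with $C=\cO_2$ (whose proof rests on the central-sequence Lemma \ref{lemma: CentralSeqO2}) together with the identification $(\cO_2^{\alpha^L})_\infty\cong(\cO_2)_\infty^{\alpha^L_\infty}$, valid because $\alpha^L$ is periodic in the rational case. (Your strong-outerness sketch is fine in outline; the paper realizes it as the mixing property $\tau(a\tilde{\lambda}^n(b))\to\tau(a)\tau(b)$ computed on words $s_\mu s_\nu^*$ via \cite[Theorem 4.2]{BJO04}.)
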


\begin{proof}
By \cite[Corollary 3.8]{CNS25}, the conclusion is equivalent to showing that the flows $\alpha^L$ and $\alpha^{(p,q)}$ are pointwise strongly approximately inner. Since this property is clearly invariant under scaling the flow, we can assume that $(L_1,L_2)$ and $(p,q)$ are pairs of coprime natural numbers. We will first show that $\alpha^L$ is pointwise strongly approximately inner. 

Let $t\in\mathbb{R}$ and let \[u_t=e^{itL_1}s_1s_1^*+e^{itL_2}s_2s_2^*\] be a unitary in $\mathcal{O}_2^{\alpha^L}$. We claim that for any $m\in\mathbb{N}$, there exists a unitary $v_{t,m}\in \mathcal{O}_2^{\alpha^L}$ such that 
\begin{equation}\label{eq: Rokh}
\|u_t-v_{t,m}\lambda(v_{t,m}^*)\|\leq\frac{1}{m},
\end{equation}
where $\lambda(x)=s_1xs_1^*+s_2xs_2^*$ for any $x\in\mathcal{O}_2$. Note that if we prove this claim, we have that \[\alpha_t^L(s_k)=u_ts_k=\lim\limits_{m\to\infty}v_{t,m}\lambda(v_{t,m}^*)s_k=\lim\limits_{m\to\infty}v_{t,m}s_kv_{t,m}^*, \ k=1,2.\]Hence, $\alpha^L$ is pointwise strongly approximately inner.

Therefore, it remains to prove the claim. Since $\lambda$ restricts to an injective unital endomorphism of $\mathcal{O}_2^{\alpha^L}$, we can consider the stationary inductive limit of $\mathcal{O}_2^{\alpha^L}$ with $\lambda$ as connecting map. This construction yields a unital $\C$-algebra $A$ together with an automorphism $\gamma$ such that $\mathcal{O}_2^{\alpha^L}$ can be identified with a $\C$-subalgebra of $A$ and $\gamma$ restricts to $\lambda$ on $\mathcal{O}_2^{\alpha^L}$. Moreover, since $\mathcal{O}_2^{\alpha^L}$ is a simple, unital, infinite dimensional AF-algebra with unique trace (Proposition \ref{prop: FixedPtRational}), so is $A$ (see \cite[Theorem 8.2.8]{Strung21} for simplicity of $A$). If $L_1=L_2=1$, one can identify the pair $(\cO_2^{\alpha^L},\lambda)$ with $\bigotimes_{m\in\mathbb{N}}M_2$ equipped with the right shift, while the pair $(A,\gamma)$ becomes $\bigotimes_{m\in\mathbb{Z}}M_2$ equipped with the right shift.

To prove the claim, it suffices to show that the automorphism $\gamma$ on $A$ has the Rokhlin property.
Indeed, combining \cite[Proposition 2.4]{RohlinShiftsKish00} and \cite[Proposition 2.2]{RohlinShiftsKish00} yields that for every $m\in\mathbb{N}$ there exists a unitary $\tilde{v}_{t,m}\in A$ such that 
\begin{equation}\label{eq: Rokh1}
\|u_t-\tilde{v}_{t,m}\gamma(\tilde{v}_{t,m}^*)\|\leq\frac{1}{2m}.
\end{equation}Then, by construction of $A$ and $\gamma$, there exists $k\in\mathbb{N}$ and a unitary $w_{t,m}\in \mathcal{O}_2^{\alpha^L}$ such that
\begin{equation*}
\|\gamma^k(\tilde{v}_{t,m})-w_{t,m}\|\leq\frac{1}{4m}.
\end{equation*}Therefore, we have that
\begin{equation}\label{eq: Rokh2}
\|\gamma^k(\tilde{v}_{t,m})\gamma^{k+1}(\tilde{v}_{t,m}^*)-w_{t,m}\gamma(w_{t,m}^*)\|\leq\frac{1}{2m}.
\end{equation}

Combining \eqref{eq: Rokh1}, \eqref{eq: Rokh2}, and the fact that $\gamma$ is equal to $\lambda$ when restricted to $\mathcal{O}_2^{\alpha^L}$, yields that 
\begin{equation}\label{eq: Rokh4}
\|\lambda^k(u_t)-w_{t,m}\lambda(w_{t,m}^*)\|\leq \frac{1}{m}. 
\end{equation}If\[u_{t,k}=u_t\lambda(u_t)\ldots\lambda^{k-1}(u_t),\]then $u_t=u_{t,k}\lambda^k(u_t)\lambda(u_{t,k}^*)$. Therefore, \eqref{eq: Rokh4} becomes
\[\|u_{t,k}^*u_t\lambda(u_{t,k})-w_{t,m}\lambda(w_{t,m}^*)\|\leq\frac{1}{m}.\]Letting $v_{t,m}=u_{t,k}w_{t,m}$ gives \eqref{eq: Rokh}.

To show that $\gamma$ has the Rokhlin property, we will use Lemma \ref{lemma: RokhlinLemmaGabor}. 
We will first show that $\gamma$ is strongly outer.
If $\tau$ is the unique trace on $A$, this precisely means that the induced automorphism $\gamma''$ on $\pi_\tau(A)''$ is outer. 
This follows as in the case of the canonical shift on the CAR algebra (see the introduction of \cite{BKSR93}).
Indeed, it suffices to show that 
\begin{equation}\label{eq: StrOuter1}
\lim_{n\to\infty}\tau(a\gamma^n(b))=\tau(a)\tau(b), \ a,b\in A.
\end{equation}Then, following \cite{BKSR93}, we can combine \cite[Theorem 4.3.22]{BraRob87} and \cite[Theorem 4.3.20]{BraRob87} to conclude that the automorphism $\gamma''$ is ergodic and hence outer. 

Let $a,b\in A$ be contractions and $\varepsilon>0$. Then, there exist $k,l\in\mathbb{N}$ and $x,y\in\cO_2^{\alpha^L}$ contractions such that 
\begin{equation}\label{eq: StrOuter2}
\|\gamma^k(a)-x\|\leq\varepsilon/2 \ \text{and} \ \|\gamma^l(b)-y\|\leq\varepsilon/2.
\end{equation}
Moreover, the trace $\tau$ is $\gamma$-invariant, so 
\[\lim_{n\to\infty}\tau(a\gamma^n(b))=\lim_{n\to\infty}\tau(\gamma^k(a)\gamma^{n}(\gamma^l(b))).\]
For the rest of this argument, if $c,d\in A$, we will write $c=_\varepsilon d$ to mean that $\|c-d\|\leq\varepsilon$.
Suppose now that \eqref{eq: StrOuter1} holds for $x,y$ and use \eqref{eq: StrOuter2} repeatedly to get that 
\begin{align*}
\lim_{n\to\infty}\tau(a\gamma^n(b)) &=_{\varepsilon}\lim_{n\to\infty}\tau(x\gamma^n(y))\\
&= \tau(x)\tau(y)\\
&=_{\varepsilon}\tau(\gamma^k(a))\tau(\gamma^l(b))\\
&=\tau(a)\tau(b).
\end{align*} Since $\varepsilon$ was arbitrary, we get that \eqref{eq: StrOuter1} holds for $a,b$ as long as it holds for any two elements in $\cO_2^{\alpha^L}$.

Thus, it suffices to show that \eqref{eq: StrOuter1} holds for any $a,b\in\cO_2^{\alpha^L}$.
Since $\cO_2^{\alpha^L}$ is the closed linear span of (certain) elements of the form $s_\mu s_\nu^*$ and \eqref{eq: StrOuter1} is linear in each variable, we can assume that $a=s_\mu s_\nu^*$ and $b=s_\eta s_\xi ^*$, where $\mu,\nu,\eta,\xi$ are words in $1$'s and $2$'s.
Assume first that $\mu=\nu$ and $\eta=\xi$. Then $a,b$ are in the canonical copy of the CAR algebra inside $\cO_2$, so \eqref{eq: StrOuter1} holds by \cite{BKSR93}.
Suppose now that either $\mu\neq \nu$ or $\eta\neq\xi$. 
Then $\tau(a)\tau(b)=0$ by \cite[Theorem 4.2]{BJO04}.
On the other hand, \[a\gamma^n(b)=\sum_{|\alpha|=n}s_\mu s_\nu^* s_\alpha s_\eta s_\xi^* s_\alpha^*,\]where $|\alpha|$ is the length of the word $\alpha$.
Assuming $n$ is large enough, $s_\nu^* s_\alpha=0$ unless $\alpha = (\nu\alpha_0)$ for some word $\alpha_0$ with $|\alpha_0|=n-|\nu|$. 
Therefore \[a\gamma^n(b)=\sum_{|\alpha_0|=n-|\nu|}s_\mu s_{\alpha_0} s_\eta s_\xi^* s_{\alpha_0}^* s_\nu^*.\]Since $(\mu\alpha_0\eta)\neq (\nu\alpha_0\xi)$, \cite[Theorem 4.2]{BJO04} yields that $\tau(a\gamma^n(b))=\tau(a)\tau(b)=0$ for any $n>|\nu|$ and hence \eqref{eq: StrOuter1} holds.
Thus, $\gamma$ is strongly outer.

To apply Lemma \ref{lemma: RokhlinLemmaGabor} and finish the argument, it remains to show that $\gamma$ is equivariantly approximately divisible i.e., there exists a unital $*$-homomorphism from $M_2\oplus M_3$ to $(A_\infty\cap A')^{\gamma_\infty}$. 
We claim that it suffices to find a unital $*$-homomorphism \[M_2\oplus M_3\to \left((\mathcal{O}_2^{\alpha^L})_\infty\cap (\cO_2^{\alpha^L})'\right)^{\lambda_\infty}.\] By the definition of the inductive limit construction, $\mathcal{O}_2^{\alpha^L}\subseteq A$, and $\gamma$ extends $\lambda$, so there exists a canonical $*$-homomorphism $\psi\colon \left((\mathcal{O}_2^{\alpha^L})_\infty\right)^{\lambda_\infty}\to A_\infty^{\gamma_\infty}$.
Thus, it remains to show that if $x\in \left((\mathcal{O}_2^{\alpha^L})_\infty\cap (\cO_2^{\alpha^L})'\right)^{\lambda_\infty}$ and $a\in A$, then $\psi(x)a=a\psi(x)$. 
Take $x=(x_n)_{n\in\mathbb{N}}$, $y\in\cO_2^{\alpha^L}$, and $k\in\mathbb{N}$ such that $a=[(0,\ldots,0,y,\lambda(y),\lambda^2(y),\ldots)]$, where the first $k-1$ entries are equal to $0$.
Since $\psi(x)=(z_n)_{n\in\mathbb{N}}$ with \[z_n=(x_n,\lambda(x_n),\lambda^2(x_n),\ldots),\]we get that
\begin{align*}
    \|\psi(x)a-a\psi(x)\|&=\lim_{n\to\infty}\|z_na-az_n\|\\
    &= \lim_{n\to\infty}\lim_{m\to\infty}\|\lambda^m(x_n)\lambda^{m-k+1}(y)-\lambda^{m-k+1}(y)\lambda^m(x_n)\|\\
    &= \lim_{n\to\infty}\lim_{m\to\infty}\|\lambda^{m-k+1}(\lambda^{k-1}(x_n)y-y\lambda^{k-1}(x_n))\|\\
    &= \lim_{n\to\infty}\|\lambda^{k-1}(x_n)y-y\lambda^{k-1}(x_n)\|\\
    &= \lim_{n\to\infty}\|x_ny-yx_n\|
\end{align*} since $\|\lambda(x_n)-x_n\|\to 0$. Furthermore, we also know that $\|x_ny-yx_n\|\to 0$, so $\psi(x)a=a\psi(x)$. Hence, $\psi$ restricts to a unital $*$-homomorphism \[\left((\mathcal{O}_2^{\alpha^L})_\infty\cap (\cO_2^{\alpha^L})'\right)^{\lambda_\infty}\to (A_\infty\cap A')^{\gamma_\infty}.\]

Moreover, an element $x\in (\mathcal{O}_2^{\alpha^L})_\infty$ is fixed by $\lambda_\infty$ if and only if $x\in (\mathcal{O}_2^{\alpha^L})_\infty\cap \mathcal{O}_2'$, so it suffices to find a unital $*$-homomorphism \[M_2\oplus M_3\to (\mathcal{O}_2^{\alpha^L})_\infty\cap (\mathcal{O}_2)'.\] Noticing that $\alpha^L$ is a periodic flow, it follows that $(\cO_2^{\alpha^L})_\infty\cong (\cO_2)_\infty^{\alpha_\infty^L}$ \cite[Lemma 3.12]{SeqSplit}, and hence the existence of such a $*$-homomorphism follows from \ref{item: ApproxDiv} of Proposition \ref{prop: FixedPtRational}.

To prove \ref{item: RokhOn}, the argument follows in the same fashion, but considering \[u_t=\sum_{k=1}^n e^{it(p+(k-1)q)}s_ks_k^* \ \text{and} \ \lambda(x)=\sum_{k=1}^n s_kxs_k^*.\] Since $\cO_n^{\alpha^{(p,q)}}$ is also a simple, unital, monotracial AF-algebra, and the flow $\alpha^{(p,q)}$ is equivariantly approximately divisible, the relevant results from Proposition \ref{prop: FixedPtRational} and \cite[Theorem 4.2]{BJO04} are enough to finish the argument in the same way as we did in \ref{item: RokhO2}.
\end{proof}

Combining Corollary \ref{cor: GenericW} and Theorem \ref{thm: RokhlinRationalQFree}, the following theorem provides a partial answer to \cite[Question 6.17]{SzaRokhlinFlows}.

\begin{theorem}\label{thm: GenericRokh}
\begin{enumerate}[label=\textit{(\roman*)}]
\item The set of $\{L_2/L_1\in\mathbb{R}_+\setminus\mathbb{Q}\mid L_1,L_2\in\mathbb{R}\}$ such that the dual flow $\widehat{\alpha^L}$ on $\cO_2\rtimes_{\alpha^L}\mathbb{R}$ has the Rokhlin property is a dense subset of $\mathbb{R}_+\setminus\mathbb{Q}$ of second Baire category.\label{item: GenRokO2}
\item The set of $\{q/p\in\mathbb{R}_+\setminus\mathbb{Q}\mid p,q\in\mathbb{R}\}$ such that the flow  $\widehat{\alpha^{(p,q)}}$ on $\cO_n\rtimes_{\alpha^{(p,q)}}\mathbb{R}$ has the Rokhlin property is a dense subset of $\mathbb{R}_+\setminus\mathbb{Q}$ of second Baire category.\label{item: GenRokOn}
\end{enumerate}
\end{theorem}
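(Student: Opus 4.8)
The plan is to run the same Baire category scheme as in the proof of Theorem \ref{thm: EquivZStable}, but now spreading pointwise strong approximate innerness rather than equivariant approximate divisibility. By \cite[Corollary 3.8]{CNS25}, the dual flow $\widehat{\alpha^L}$ has the Rokhlin property precisely when $\alpha^L$ is pointwise strongly approximately inner in the sense of Definition \ref{defn: ApproxInner}, so it suffices to exhibit a dense $G_\delta$ set of ratios for which the latter holds. As this property is invariant under rescaling the flow, I would fix $L_1=1$ and parametrise by $L_2\in\mathbb{R}_+$. Theorem \ref{thm: RokhlinRationalQFree} supplies the rational input: for $L_2=q\in\mathbb{Q}_+$ the flow $\alpha^{(1,q)}$ is pointwise strongly approximately inner, and its proof produces, for each time $t$ and each $m\in\mathbb{N}$, a unitary $v\in\cO_2^{\alpha^{(1,q)}}$ with $\|u_t-v\lambda(v^*)\|\le 1/m$, where $u_t=e^{it}s_1s_1^*+e^{itq}s_2s_2^*$; in particular $\|\alpha^{(1,q)}_t(a)-vav^*\|$ is small for $a$ in any prescribed finite set, while $v$ is fixed by the whole rational flow.

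The crux is that such a rational implementing unitary keeps working, up to a small error, for $L_2$ close to $q$. Approximating $v$ by a finite linear combination of words $s_\mu s_\nu^*$ in $\cO_2^{\alpha^{(1,q)}}$, each surviving term satisfies $(|\mu|_1-|\nu|_1)+q(|\mu|_2-|\nu|_2)=0$ by Lemma \ref{lemma: GicarSubalg}, so under a neighbouring flow one computes $\alpha^{(1,L_2)}_s(s_\mu s_\nu^*)=e^{is(L_2-q)(|\mu|_2-|\nu|_2)}s_\mu s_\nu^*$. For $|s|\le T$ and the bounded exponents occurring in the chosen finite combination, this shows that $v$ is approximately fixed by $\alpha^{(1,L_2)}$ on $[-T,T]$ once $|L_2-q|$ is small; simultaneously $\alpha^{(1,L_2)}_t(a)$ stays close to $\alpha^{(1,q)}_t(a)$ for $a$ in the finite set. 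Hence, for each finite $\mathcal{F}\subseteq\cO_2$, each $\varepsilon>0$, $T>0$ and each time $t$, there is a neighbourhood $V(q,\mathcal{F},\varepsilon,T,t)$ of $q$ on which $v$ witnesses the defining estimates of Definition \ref{defn: ApproxInner} to within $\varepsilon$. The union $\bigcup_{q\in\mathbb{Q}_+}V(q,\mathcal{F},\varepsilon,T,t)$ is open and dense in $\mathbb{R}_+$ and contains $\mathbb{Q}_+$; intersecting the countably many such sets obtained as $\mathcal{F}$ ranges over a countable dense family, $\varepsilon=1/n$, $T=n$, and $t$ ranges over a countable dense set $D\subseteq\mathbb{R}$, the Baire category theorem yields a dense $G_\delta$ set $G\subseteq\mathbb{R}_+$ with $\mathbb{Q}_+\subseteq G$.

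For $L_2\in G$ the flow $\alpha^{(1,L_2)}$ is then pointwise strongly approximately inner at every $t\in D$, and the \emph{main obstacle} is to upgrade this to every $t\in\mathbb{R}$, as Definition \ref{defn: ApproxInner} requires. I would resolve this by showing that the set $S\subseteq\mathbb{R}$ of times admitting an implementing sequence with the three properties of Definition \ref{defn: ApproxInner} is a closed subgroup of $\mathbb{R}$: it is closed under inverses (pass to the adjoint sequence) and under addition (by a diagonal composition of implementing sequences, using that conjugation is multiplicative and that the approximate-fixedness estimates add up), and it is topologically closed, since norm-continuity of the flow makes a sequence implementing $\alpha_{t_0}$ also approximately implement $\alpha_t$ for $t$ near $t_0$, with the compact-fixedness condition stable under such perturbations. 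A closed subgroup of $\mathbb{R}$ containing the dense set $D$ is all of $\mathbb{R}$, so $\alpha^{(1,L_2)}$ is pointwise strongly approximately inner for every $L_2\in G$.

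Applying \cite[Corollary 3.8]{CNS25} once more, the dual flow $\widehat{\alpha^L}$ has the Rokhlin property for each $L_2\in G$, and since $G\setminus\mathbb{Q}$ is a dense subset of $\mathbb{R}_+\setminus\mathbb{Q}$ of second Baire category, this proves part \ref{item: GenRokO2}. Part \ref{item: GenRokOn} follows in exactly the same way, working with the generalised GICAR algebra, the unitary $u_t=\sum_{k=1}^n e^{it(p+(k-1)q)}s_ks_k^*$ and the canonical endomorphism $\lambda(x)=\sum_{k=1}^n s_kxs_k^*$, and invoking the $\cO_n$ statements of Theorem \ref{thm: RokhlinRationalQFree} and Proposition \ref{prop: FixedPtRational} in place of their $\cO_2$ counterparts.
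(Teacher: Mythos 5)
Your proposal is correct, and it follows the paper's overall scheme — reduce to pointwise strong approximate innerness via \cite[Corollary 3.8]{CNS25}, feed in the rational case from Theorem \ref{thm: RokhlinRationalQFree}, perturb the implementing unitaries (which lie in the fixed-point algebra of the rational flow) to nearby parameters, and run a Baire category argument over the sets $\bigcup_{q\in\mathbb{Q}_+}V(q,\ldots)$ — but it diverges at one genuine technical juncture. The paper's neighbourhoods $V(q,\mathcal{F},\varepsilon,T)$ are asserted to work \emph{uniformly} for all $|t|\leq T$, so that the resulting $G_\delta$ set directly consists of pointwise strongly approximately inner flows; implicitly this rests on a compactness argument in $t$ (finitely many unitaries, exactly fixed by the rational flow, cover the interval $[-T,T]$ up to $\varepsilon$, and one then shrinks the neighbourhood so all of them survive the perturbation). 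Your neighbourhoods $V(q,\mathcal{F},\varepsilon,T,t)$ are tied to a single time $t$, so your $G_\delta$ set only gives approximate innerness on a countable dense set $D$ of times, and you compensate with an extra lemma absent from the paper: the set $S$ of times admitting implementing sequences as in Definition \ref{defn: ApproxInner} is a closed subgroup of $\mathbb{R}$, hence equals $\mathbb{R}$ once it contains $D$. This lemma is true, and your word-by-word character computation $\alpha^{(1,L_2)}_s(s_\mu s_\nu^*)=e^{is(L_2-q)(|\mu|_2-|\nu|_2)}s_\mu s_\nu^*$ is exactly the perturbation estimate the paper leaves implicit, so the trade-off is: the paper avoids the upgrading lemma at the cost of the uniform-in-$t$ claim, while you avoid any uniformity in $t$ at the cost of proving the subgroup lemma.

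One caveat on your justification of closedness of $S$: as phrased ("a sequence implementing $\alpha_{t_0}$ also approximately implements $\alpha_t$ for $t$ near $t_0$") it is slightly misleading, since an exact implementer at $t_0$ is only an approximate implementer at $t\neq t_0$, with an error $\|\alpha_t(a)-\alpha_{t_0}(a)\|$ that never vanishes and depends on $a$. The correct argument is a diagonalisation: given $t_k\to t$ with $t_k\in S$, extract from the implementing sequences for the $t_k$ a single sequence $(w_k)$ achieving, on an increasing exhaustion of a countable dense subset of the algebra and on the compact sets $[-k,k]$, errors $1/k$ plus $\|\alpha_{t_k}(a_j)-\alpha_t(a_j)\|$; since the defining conditions of Definition \ref{defn: ApproxInner} are pointwise in $a$, this diagonal sequence implements $\alpha_t$. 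The same diagonal device is what makes your closure-under-addition step rigorous (with the strictness conditions handled trivially here, as all your implementers are unitaries). With these routine details filled in, your argument is complete.
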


\begin{proof}
By \cite[Corollary 3.8]{CNS25}, the conclusion of \ref{item: GenRokO2} is equivalent to showing that the set of $\{L_2/L_1\in\mathbb{R}_+\mid L_1,L_2\in\mathbb{R}\}$ such that $\alpha^L$ is pointwise strongly approximately inner is a dense $G_\delta$ subset of $\mathbb{R}_+$ containing $\mathbb{Q}_+$.

Since the property of being pointwise strongly approximately inner is invariant under scaling the flow, we can assume that $L_1=1$. Combining \ref{item: RokhO2} of Theorem \ref{thm: RokhlinRationalQFree} and \cite[Corollary 3.8]{CNS25}, for any finite set $\mathcal{F}\subseteq \mathcal{O}_2$, $\varepsilon>0$, $T>0$, and  $q\in\mathbb{Q}_+$, there exists a neighbourhood $V(q,\mathcal{F},\varepsilon,T)$ of $q\in\mathbb{Q}_+$ such that for any $L_2\in V(q,\mathcal{F},\varepsilon,T)$, and any $|t|\leq T$, there exists a unitary $u_t\in\mathcal{O}_2$ such that \[\|\alpha_t^L(x)-u_txu_t^*\|\leq\varepsilon, \ x\in\mathcal{F}, \ \text{and}\] \[\|\alpha_s^L(u_t)-u_t\|\leq\varepsilon, \ |s|\leq T,\] where $L=(1,L_2)$. 

Let \[G(\mathcal{F},\varepsilon, T)=\bigcup_{q\in\mathbb{Q}_+} V(q,\mathcal{F},\varepsilon, T),\] which is a dense open set in $\mathbb{R}_+$. Letting $\mathcal{F}(\mathcal{O}_2)$ be the set of finite sets of $\cO_2$, we set \[G=\bigcap_{\mathcal{F}\in\mathcal{F}(\cO_2)}\bigcap_{n\in\mathbb{N}}G(\mathcal{F},1/n,n).\]Then $G$ is contained in the set of $L_2\in\mathbb{R}_+$ such that $\alpha^L$ is pointwise strongly approximately inner, and by the Baire category theorem, $G$ is a dense $G_\delta$ subset of $\mathbb{R}_+$ containing $\mathbb{Q}_+$. In particular, $G\setminus\mathbb{Q}$ is a dense subset of $\mathbb{R}_+\setminus\mathbb{Q}$ of second Baire category. 

The proof of \ref{item: GenRokOn} follows exactly the same argument as above, but using \ref{item: RokhOn} of Theorem \ref{thm: RokhlinRationalQFree} instead of \ref{item: RokhO2} of Theorem \ref{thm: RokhlinRationalQFree}.
\end{proof}

\begin{rmk}
If $L_1$ and $L_2$ are rationally independent and have different signs, Kishimoto showed in \cite{Kish02} that $\alpha^L$ has the Rokhlin property. Combined with \cite[Theorem 4.6]{KishInvApproxInner}, one gets that $\widehat{\alpha^L}$ has the Rokhlin property.
Likewise, combining the main result of \cite{KishOneCocycle} and \cite[Theorem 4.6]{KishInvApproxInner}, we can also obtain the Rokhlin property for a large class of dual actions.\ In particular, if $p,q$ are rationally independent and $p$ and $p+(n-1)q$ have different signs, then $\widehat{\alpha^{(p,q)}}$ has the Rokhlin property.
\end{rmk}

We can now combine Theorem \ref{thm: EquivZStable} and Theorem \ref{thm: GenericRokh} to obtain the main result of this article.

\begin{theorem}\label{thm: GenericClassif}
The set of $\{L_2/L_1\in \mathbb{R}_+\setminus\mathbb{Q}\mid L_1,L_2\in\mathbb{R}\}$ such that the flow $\alpha^L$ on $\cO_2$ is classified, up to cocycle conjugacy, by the unique real number $\beta$ given by \[e^{-\beta L_1}+e^{-\beta L_2}=1\] is a dense subset of $\mathbb{R}_+\setminus\mathbb{Q}$ of second Baire category.
\end{theorem}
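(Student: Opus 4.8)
The plan is to establish the two directions of the classification separately. The easy direction is that $\alpha^L\au\alpha^{L'}$ forces $\beta=\beta'$: the inverse temperature of the unique KMS state is a cocycle-conjugacy invariant, since a cocycle perturbation of a flow admits the same $\beta$-KMS states (via the perturbation theory of KMS states) and a conjugacy transports them; as each $\alpha^L$ admits a unique KMS state at the unique $\beta$ solving $e^{-\beta L_1}+e^{-\beta L_2}=1$, cocycle conjugacy matches these values. The substance is the converse, and here—since $\alpha^L$ itself has no Rokhlin property in the same-sign regime—I would follow the strategy outlined in the introduction and classify the \emph{dual} flows, transporting the result back through Proposition \ref{prop: CocycleConj}.

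First I would fix the generic set. Let $G\subseteq\mathbb{R}_+\setminus\mathbb{Q}$ be the intersection of the second-category sets furnished by Theorem \ref{thm: EquivZStable}\textit{(i)} and Theorem \ref{thm: GenericRokh}\textit{(i)}. Being a finite intersection of dense $G_\delta$ subsets of the Baire space $\mathbb{R}_+\setminus\mathbb{Q}$, it is again dense $G_\delta$, hence of second Baire category. For $L$ with $L_2/L_1\in G$ we then have simultaneously that $\alpha^L$ is equivariantly $\Z$-stable, so by Corollary \ref{cor: GenericW} the crossed product $B_L:=\cO_2\rtimes_{\alpha^L}\mathbb{R}\cong\mathcal{W}\otimes\mathcal{K}$ is a simple, separable, nuclear, $\Z$-stable $\C$-algebra satisfying the UCT, and that the dual flow $\widehat{\alpha^L}$ on $B_L$ has the Rokhlin property. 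Thus the $\mathbb{R}$-$\C$-algebra $(B_L,\widehat{\alpha^L})$ falls within the scope of \cite[Theorem C]{SzaRokhlinFlows}.

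Next I would identify the relevant invariant with $\beta$. Since $K_*(\mathcal{W}\otimes\mathcal{K})=0$ and $\mathcal{W}\otimes\mathcal{K}$ satisfies the UCT, the only nontrivial part of the data classifying the Rokhlin flows $\widehat{\alpha^L}$ is the induced action on the one-dimensional cone of traces, i.e.\ the rate at which $\widehat{\alpha^L}$ scales the unique trace. By the KMS--trace duality, a $\beta$-KMS state for $\alpha^L$ corresponds to a lower-semicontinuous trace $\tau$ on $B_L$ with $\tau\circ\widehat{\alpha^L}_t=e^{-\beta t}\tau$; as $\mathcal{W}\otimes\mathcal{K}$ has a unique trace up to scalar, the scaling rate of $\widehat{\alpha^L}$ is exactly $\beta$. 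Hence, if $\beta=\beta'$, the flows $\widehat{\alpha^L}$ and $\widehat{\alpha^{L'}}$ carry the same invariant and \cite[Theorem C]{SzaRokhlinFlows} yields that they are cocycle conjugate. I would then transport this back: Proposition \ref{prop: CocycleConj}\ref{item: Cocycle2} converts the cocycle conjugacy of the dual flows into a conjugacy $(\cO_2\otimes\mathcal{K},\alpha^L\otimes\Ad\rho)\cong(\cO_2\otimes\mathcal{K},\alpha^{L'}\otimes\Ad\rho)$, and since $K_0(\cO_2)=0$ the hypothesis $K_0(\varphi)([1\otimes p]_0)=[1\otimes p]_0$ of Proposition \ref{prop: CocycleConj}\ref{item: Cocycle3} holds automatically (both classes vanish), so that part delivers $\alpha^L\au\alpha^{L'}$.

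The main obstacle is the middle step: pinning down precisely which invariant \cite[Theorem C]{SzaRokhlinFlows} attaches to these Rokhlin flows on the stably projectionless algebra $\mathcal{W}\otimes\mathcal{K}$, and verifying rigorously that it reduces to the trace-scaling rate and that this rate coincides with the KMS inverse temperature $\beta$ uniformly in $L$ (in particular independently of the ratio $L_2/L_1$, so that flows with different ratios but equal $\beta$ are identified). This demands careful bookkeeping of the densely-defined, lower-semicontinuous trace on $\mathcal{W}\otimes\mathcal{K}$ under the dual flow together with a clean invocation of the KMS--trace correspondence; the vanishing of $K$-theory is exactly what collapses the remaining invariant to a single real parameter and makes it detectable by $\beta$.
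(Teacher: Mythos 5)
Your proposal is correct and follows essentially the same route as the paper: restrict to the generic set where the dual flow falls under \cite[Theorem C]{SzaRokhlinFlows} (classifiability of the crossed product via Theorem \ref{thm: EquivZStable} together with \cite[Theorem 4.1]{KishKumj96}, Connes' Thom isomorphism and the UCT, and the Rokhlin property via Theorem \ref{thm: GenericRokh}), identify the classifying invariant with the trace-scaling rate, and descend to a cocycle conjugacy of the original flows via parts \ref{item: Cocycle2} and \ref{item: Cocycle3} of Proposition \ref{prop: CocycleConj}, using $K_0(\cO_2\otimes\mathcal{K})=0$. The step you flag as the main obstacle is precisely where the paper cites \cite[Theorem 4.1]{KishKumj96}, which yields directly that the unique (up to scalar) lower-semicontinuous trace $\tau$ on $\cO_2\rtimes_{\alpha^L}\mathbb{R}$ satisfies $\tau\circ\widehat{\alpha^L}_t=e^{-\beta t}\tau$ with $e^{-\beta L_1}+e^{-\beta L_2}=1$, so that Szabó's invariant collapses to the single parameter $\beta$.
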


\begin{proof}
It is enough to prove the statement assuming $L_1$ and $L_2$ are both positive. The strategy is to show that the dual flow $\widehat{\alpha^L}$ falls under the umbrella of \cite[Theorem C]{SzaRokhlinFlows}.
Since $L_1$ and $L_2$ are rationally independent, \cite[Theorem 4.1]{KishKumj96} shows that the crossed product $\cO_2\rtimes_{\alpha^L}\mathbb{R}$ is a simple and stably projectionless $\C$-algebra which has a unique trace up to scalar multiple. In addition, 
\begin{equation}\label{eq: Scale}
\tau\circ\widehat{\alpha^L}_t=e^{-\beta t}\tau, \ \text{where} \ e^{-\beta L_1}+e^{-\beta L_2}=1.
\end{equation} Furthermore, the $K$-theory groups of the crossed $\cO_2\rtimes_{\alpha^L}\mathbb{R}$ are $0$ by Connes' Thom isomorphism \cite{ConnesThom}. Since $\cO_2\rtimes_{\alpha^L}\mathbb{R}$ satisfies the UTC \cite[22.3.5 (g)]{blackadar}, $KK(\cO_2\rtimes_{\alpha^L}\mathbb{R},\cO_2\rtimes_{\alpha^L}\mathbb{R})=0$.
Since $\Z$-stability implies finite nuclear dimension \cite{CE20}, combining Theorem \ref{thm: EquivZStable}, Theorem \ref{thm: GenericRokh}, and \cite[Theorem C]{SzaRokhlinFlows}, we get that the dual flow is generically classifiable, up to cocycle conjugacy, by the real number $\beta$ in the statement of the theorem. Thus, since $K_0(\cO_2\otimes\mathcal{K})=0$, the conclusion follows by combining \ref{item: Cocycle2} and \ref{item: Cocycle3} of Proposition \ref{prop: CocycleConj}.
\end{proof}

\begin{rmk}
The real number $\beta$ in the statement of the theorem above is precisely the inverse temperature of the unique KMS state of $\alpha^L$ on $\cO_2$. Thus, Theorem \ref{thm: GenericClassif} offers a partial answer to a question asked by Kishimoto (see the comments following \cite[Theorem 1.2]{Kish02}).  
\end{rmk}

\begin{rmk}
Due to $K$-theoretic issues, we cannot generalise Theorem \ref{thm: GenericClassif} to Cuntz algebras $\cO_n$ for $n>2$. Precisely if we consider a quasi-free flow $\alpha^{(p,q)}$ on a Cuntz algebra $\cO_n$ for $n>2$, then $K_1(\cO_n\rtimes_\alpha\mathbb{R})\neq 0$, so the classification result in \cite[Theorem C]{SzaRokhlinFlows} cannot be applied. We expect that the techniques developed in this paper will be effective in classifying more general flows (such as quasi-free flows on more general Cuntz algebras) once a classification of Rokhlin flows on a larger class of stably finite $\C$-algebras will be available. 
\end{rmk}

\bibliographystyle{abbrv}
\bibliography{quasifree}
\end{document}